\documentclass[reqno]{amsart}
\usepackage{amsthm,amsmath,amssymb}
\usepackage{stmaryrd,mathrsfs}
\usepackage{hyperref}
\usepackage{esint}
\usepackage{tikz}
\usepackage{amsmath}

\usetikzlibrary{patterns,arrows,decorations.pathreplacing}

\allowdisplaybreaks

%\usepackage{setspace}
%\doublespacing

%%%%%%%%%%%%%%%%%%%%%%%%%%%%%%%%%%%%%%%%%%%%%%%%%%%%%%%%%%%%%%%%%%%%%%%%%%%%%%%%%%%%%%%%%%%%%%%%
%
%  NEW COMMANDS START
%
%%%%%%%%%%%%%%%%%%%%%%%%%%%%%%%%%%%%%%%%%%%%%%%%%%%%%%%%%%%%%%%%%%%%%%%%%%%%%%%%%%%%%%%%%%%%%%%

%% Operations of analysis %%%%%%%%%%%%%%%%%%%%%%%

\newcommand{\dist}[0]{\operatorname{dist}}

%% Norms %%%%%%%%%%%%%%%%%%%%%%%%%%%%%%%%%%%%%%%%

%% Topology and linear operators %%%%%%%%%%%%%%%

%{\mathscr{C}ar}

%% Numerical operations %%%%%%%%%%%%%%%%%%%%%%%%%%

%% Sets %%%%%%%%%%%%%%%%%%%%%%%

%% Probability and R-boundedness %%%%%%%%%%%%%%%%%%

%% Dyadic cubes %%%%%%%%%%%%%%%%%%%%%%%%%%%%%%%%

%% Theorems etc. %%%%%%%%%%%%%%
\swapnumbers \numberwithin{equation}{section}

\theoremstyle{plain}
\newtheorem{theorem}[equation]{Theorem}
\newtheorem{proposition}[equation]{Proposition}
\newtheorem{corollary}[equation]{Corollary}
\newtheorem{lemma}[equation]{Lemma}

\theoremstyle{definition}
\newtheorem{definition}[equation]{Definition}

\theoremstyle{remark}
\newtheorem{remark}[equation]{Remark}

\makeatletter
\@namedef{subjclassname@2010}{%
  \textup{2010} Mathematics Subject Classification}
\makeatother

%%%%%%%%%%%%%%%%%%%%%%%%%%%%%%%%%%%%%%%%%%%%%%%%%%%%%%%%%%%%%%%%%%%%%%%%%%%%%%%%%%%%%%%%%%%%%%%%
%
% DOCUMENT STARTS
%
%%%%%%%%%%%%%%%%%%%%%%%%%%%%%%%%%%%%%%%%%%%%%%%%%%%%%%%%%%%%%%%%%%%%%%%%%%%%%%%%%%%%%%%%%%%%%%%%

\begin{document}

\title[Equality of $VJN_p$ and $CJN_p$]{The John-Nirenberg space: equality of the vanishing subspaces $VJN_p$ and $CJN_p$}

\author[R. Korte]{Riikka Korte}
\address{(R.K.) Aalto University, Department of Mathematics and Systems analysis, Espoo, Finland}
\email{riikka.korte@aalto.fi}

\author[T. Takala]{Timo Takala}
\address{(T.T.) Aalto University, Department of Mathematics and Systems analysis, Espoo, Finland}
\email{timo.i.takala@aalto.fi}

%\let\thefootnote\relax\footnotetext{Affiliations of the authors: Aalto University, Department of Mathematics and Systems analysis, Espoo, Finland}
%\let\thefootnote\relax\footnotetext{Email of the corresponding author: timo.i.takala@aalto.fi}

%\thanks{}

\date{\today}

\keywords{John–Nirenberg space, vanishing subspace, Morrey type integral, Euclidian space, bounded mean oscillation, John-Nirenberg inequality.}
\subjclass[2020]{42B35, 46E30}

% 42B30 $H^p$-spaces
% 42B35 Function spaces arising in harmonic analysis 
% 46E30 Spaces of measurable functions ($L^p$-spaces, Orlicz spaces, K\"othe function spaces, Lorentz spaces, rearrangement invariant spaces, ideal spaces, etc.)
% 46E35 Sobolev spaces and other spaces of "smooth" functions, embedding theorems, trace theorems

\maketitle

\begin{abstract}
%We study weak $L^p$ type integrals of $JN_p$ functions on cubes. We show that these integrals tend to zero, both when the measure of the cube tends to zero and when the measure tends to infinity. In particular this shows that $VJN_p$ and $CJN_p$ coincide. These are vanishing subspaces of $JN_p$ that are defined in a similar way as $VMO$ and $CMO$, which are subspaces of $BMO$.
%We show that the space $JN_{p,q}(\mathbb{R}^n)$, which uses a more general definition, is equivalent with $L^p(\mathbb{R}^n) / \mathbb{R}$, if $p = q$.

The John-Nirenberg spaces $JN_p$ are generalizations of the space of bounded mean oscillation $BMO$ with $JN_\infty=BMO$. Their vanishing subpaces $VJN_p$ and $CJN_p$ are defined in similar ways as $VMO$ and $CMO$, which are subspaces of $BMO$. 
%In this paper, we study $JN_p$-spaces and their vanishing subspaces $VJN_p$ and $CJN_p$, which are defined in a similar way as $VMO$ and $CMO$. 
As our main result, we prove that $VJN_p$ and $CJN_p$ coincide by showing that certain Morrey type integrals of $JN_p$ functions tend to zero for small and large cubes. We also show that $JN_{p,q}(\mathbb{R}^n) =L^p(\mathbb{R}^n) / \mathbb{R}$, if $p = q$.

\end{abstract}

\section{Introduction}

In 1961 John and Nirenberg studied the well known space of bounded mean oscillation $BMO$ and proved the profound John-Nirenberg inequality for $BMO$ functions \cite{johnnirenberg}. The space $BMO$ plays a vital role in harmonic analysis and it has been studied very extensively. In the same article they also defined a generalization of $BMO$, which is now known as the John-Nirenberg space, or $JN_p$, with a parameter $1 < p < \infty$, see Definition \ref{jnpmaaritelma} below. In addition they proved the John-Nirenberg inequality for $JN_p$ functions, see Theorem \ref{johnnirenberglemma} below. From this theorem it follows that $JN_p(Q_0) \subset L^{p,\infty}(Q_0)$, where $Q_0 \subset \mathbb{R}^n$ is a bounded cube. It is also easy to see that $L^p(Q_0) \subset JN_p(Q_0)$. Both of these inclusions are strict, however this is far from trivial. An example of a function in $JN_p \setminus L^p$ was discovered in 2018 \cite{dafnihytonenkorteyue}. Thus the space $JN_p$ is a nontrivial space between $L^p$ and $L^{p,\infty}$. However there are still many unanswered questions related to the study of John-Nirenberg spaces.

Various John–Nirenberg type spaces have attracted attention in recent years, including the dyadic John-Nirenberg space \cite{kinnunenmyyrylainen}, the congruent John-Nirenberg space \cite{jiataoyangyuanzhang, taoyangyuancongruent}, the John-Nirenberg-Campanato space \cite{taoyangyuancampanato, sunxieyang} and the sparse John-Nirenberg space \cite{dominguezmilman}.
The John-Nirenberg space can also be defined with medians instead of using integral averages \cite{myyrylainen}.
Hurri-Syrj\"anen et al. established a local-to-global result for the space $JN_p(\Omega)$, where $\Omega \subset \mathbb{R}^n$ is an open set \cite{hurrisyrjanenmarolavahakangas} and Marola and Saari found similar results for $JN_p$ in the setting of metric measure spaces \cite{marolasaari}.

The spaces $VMO$ and $CMO$ are well known vanishing subspaces of $BMO$. They were introduced by Sarason \cite{sarason} and Neri \cite{neri} respectively. Recently there has been research on the $JN_p$ counterparts of these spaces, which are denoted by $VJN_p$ and $CJN_p$ \cite{taoyangyuanvjnp}. It follows directly from the definitions that $L^p \subseteq CJN_p \subseteq VJN_p \subseteq JN_p$. Moreover, examples in \cite{takala} demonstrate that  $L^p \neq CJN_p$ and $VJN_p \neq JN_p$. However it has been an open question whether the set $VJN_p\setminus CJN_p$ is nonempty, see \cite{takala,taoyangyuanvjnp}. As our main result we show that $VJN_p$ and $CJN_p$ coincide.

Our method is to study Morrey or weak $L^p$ type integrals 
\begin{equation}\label{eqn:morrey}
|Q|^{\frac{1}{p}-1} \int_Q |f|,
\end{equation}
where $Q$ is a cube.
%Our main results have to do with these integrals, which we call weak $L^p$ type integrals based on the expression of the weak $L^p$ norm in Definition \ref{weaklpdefinition}.
We prove that if $f \in JN_p$, then these integrals tend to zero both when $|Q| \to 0$ and when $|Q| \to \infty$. See Theorems \ref{vanishingbigcubes} and \ref{vanishingsmallcubes} below for precise statements of these results. %Theorem \ref{vanishingsmallcubes} answers a question that was posed in \cite{takala}. 
Note that $L^p$ functions have this property, but weak $L^p$ functions do not.
%The results are relatively easy to show for $L^p$ functions - see Remarks \ref{firstremark} and \ref{secondremark} below - but they are not true for weak $L^p$ functions. This is why the weak $L^p$ type integrals require independent study for $JN_p$ functions.
From Theorem \ref{vanishingbigcubes} it follows easily that $CJN_p = VJN_p$, see  Corollary \ref{cjnpvjnptulos}.

%There has been also some interest in studying more general versions of John-Nirenberg norms. 
In Section \ref{preliminaries} we briefly study the more general version of the John-Nirenberg type spaces $JN_{p,q}(X)$, where the $L^1$-norm of the oscillation term is replaced with the $L^q$-norm where $q \geq 1$. This generalization has been studied in \cite{dafnihytonenkorteyue, taoyangyuansurvey, taoyangyuancongruent}. It has turned out that in case $X$ is a bounded cube, the $JN_{p,q}$ norm is equivalent with the $JN_p$ norm (for $q<p$) or $L^q$ norm (for $q \geq p$). In case $X = \mathbb{R}^n$, the $JN_{p,q}$ norm is equivalent with the $JN_p$ norm (for $q<p$), and if $q>p$, the space contains only functions that are constant almost everywhere. We complete this picture by showing that in the borderline case $p = q$ and $X = \mathbb{R}^n$, this space is equivalent with the space $L^p(\mathbb{R}^n) / \mathbb{R}$ i.e. the space of functions $f$ for which there is a constant $b$ such that $f - b \in L^p(\mathbb{R}^n)$. The result answers a question raised in \cite[Remark 2.9]{taoyangyuancongruent}.

\section{Preliminaries}\label{preliminaries}
Throughout this paper by a cube we mean an open cube with edges parallel to the coordinate axes. We let $X \subseteq \mathbb{R}^n$ be either a bounded cube or the entire space $\mathbb{R}^n$.
%For $\Omega \subset \mathbb{R}^n$ we denote by $\chi_{\Omega}$ the characteristic function of $\Omega$.
If $Q$ is a cube, we denote by $l(Q)$ its side length. For any $r > 0$, we denote by $rQ$ the cube with the same center as $Q$ but with side length $r \cdot l(Q)$. For any measurable set $E \subset \mathbb{R}^n$, such that $0 < |E| < \infty$, we denote the integral average of a function $f$ over $E$ by
\begin{equation*}
f_E
:= \fint_E f
:= \frac{1}{|E|} \int_E f.
\end{equation*}

\begin{definition}[Weak $L^p$-spaces]
\label{weaklpdefinition}
Let $1 \leq p < \infty$. For a measurable function $f$ we define
\begin{equation*}
\| f \|_{L^{p,\infty}(X)}
:= \sup_{t > 0} t | \{ x \in X : |f(x)| > t \} |^{1/p}.
\end{equation*}
We say that $f$ is a weak $L^p$ function, or $f \in L^{p,\infty}(X)$, if $\| f \|_{L^{p,\infty}(X)}$ is finite.
We define
\begin{equation*}
\| f \|_{L^{p,w} (X)}
:= \sup_{\substack{E \subseteq X \\ 0 < |E| < \infty }} |E|^{1/p} \fint_E |f(x)| dx,
\end{equation*}
where $E$ is any measurable set. We say that $f \in L^{p,w}(X)$, if $\| f \|_{L^{p,w}(X)}$ is finite.
\end{definition}

\begin{remark}
\label{weaklpremark}
The expression $\| \cdot \|_{L^{p,\infty}(X)}$ is not a norm, since the triangle inequality fails to hold. However $\| \cdot \|_{L^{p,w}(X)}$ does define a norm.
Additionally, if $p > 1$, $\| f \|_{L^{p,\infty}(X)}$ and $\| f \|_{L^{p,w}(X)}$ are comparable and therefore $L^{p,w}(X) = L^{p,\infty}(X)$, see Chapter 2.8.3 in \cite{duoandikoetxea}.
%For the proof of the equivalence, we refer to Exercises 1.1.11. and 1.1.12. in \cite{grafakos}.
%If $p=1$, then clearly $\| f \|_{L^{p,w}(X)} = \| f \|_{L^p(X)}$.
\end{remark}

\begin{definition}[$JN_p$]
\label{jnpmaaritelma}
Let $1 < p < \infty$. A function $f$ is in $JN_p(X)$ if $f \in L_{loc}^1(X)$ and there is a constant $K < \infty$ such that
\begin{equation*}
\sum_{i=1}^{\infty} |Q_i| \left( \fint_{Q_i} | f - f_{Q_i} | \right)^p \leq K^p
\end{equation*}
for all countable collections of pairwise disjoint cubes $(Q_i)_{i=1}^{\infty}$ in $X$.
We denote the smallest such number $K$ by $\|f\|_{JN_p(X)}$.
\end{definition}

The space $JN_p$ is related to $BMO$ in the sense that the $BMO$ norm of a function is the limit of the function's $JN_p$ norm when $p$ tends to infinity.
It is easy to see that
\begin{equation}
\label{itseisarvoepayhtalo}
\| |f| \|_{JN_p(X)}
\leq 2 \| f \|_{JN_p(X)}.
\end{equation}
Likewise it is clear that $L^p(X) \subset JN_p(X)$, as we get from H\"older's inequality that
$\| f \|_{JN_p(X)} \leq 2 \| f \|_{L^p(X)}$.
If $X$ is a bounded cube, then $JN_p(X) \subset L^{p, \infty} (X)$.
This is known as the John-Nirenberg inequality for $JN_p$ functions.

\begin{theorem}[John-Nirenberg inequality for $JN_p$]
\label{johnnirenberglemma}
Let $1 < p < \infty$, $Q_0 \subset \mathbb{R}^n$ a bounded cube and $f \in JN_p(Q_0)$. Then $f \in L^{p,\infty}(Q_0)$ and
\begin{equation*}
\| f - f_{Q_0} \|_{L^{p,\infty}(Q_0)}
\leq c \| f \|_{JN_p(Q_0)}
\end{equation*}
with some constant $c = c(n,p)$.
\end{theorem}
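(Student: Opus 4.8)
The plan is to reduce the statement to a weak-type Fefferman--Stein inequality. Fix a dyadic structure on $Q_0$ and set $M^{\#}f(x):=\sup\{\fint_Q|f-f_Q|:\ Q\ni x,\ Q\subseteq Q_0\text{ dyadic}\}$ and $Mf(x):=\sup\{\fint_Q|f|:\ Q\ni x,\ Q\subseteq Q_0\text{ dyadic}\}$. The first step is the observation, immediate from Definition \ref{jnpmaaritelma}, that $\|M^{\#}f\|_{L^{p,\infty}(Q_0)}\le\|f\|_{JN_p(Q_0)}$: for $\lambda>0$ the set $\{M^{\#}f>\lambda\}$ is the union of the maximal dyadic cubes $Q_i$ with $\fint_{Q_i}|f-f_{Q_i}|>\lambda$, these are pairwise disjoint, and hence $\lambda^p\sum_i|Q_i|\le\sum_i|Q_i|\big(\fint_{Q_i}|f-f_{Q_i}|\big)^p\le\|f\|_{JN_p(Q_0)}^p$. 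This is the only place the $JN_p$ hypothesis enters.

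The second step is a good-$\lambda$ inequality: there is $c_n$ so that for every $g\in L^1(Q_0)$ with $g_{Q_0}=0$, every $\lambda\ge\fint_{Q_0}|g|$ and every $\gamma>0$,
\[
\big|\{x\in Q_0:\ Mg(x)>2^{n+1}\lambda,\ M^{\#}g(x)\le\gamma\lambda\}\big|\ \le\ c_n\,\gamma\,\big|\{x\in Q_0:\ Mg(x)>\lambda\}\big| .
\]
This is proved by decomposing $\{Mg>\lambda\}$ into the maximal dyadic cubes $R_i$ with $\fint_{R_i}|g|>\lambda$ (so $\fint_{R_i}|g|\le 2^n\lambda$), noting that on $R_i$ one has $Mg\le M(g-g_{R_i})+|g_{R_i}|$ with $|g_{R_i}|\le 2^n\lambda$, and that if $M^{\#}g\le\gamma\lambda$ somewhere on $R_i$ then $\fint_{R_i}|g-g_{R_i}|\le\gamma\lambda$; the weak $(1,1)$ bound for the dyadic maximal function inside $R_i$ then gives $|\{Mg>2^{n+1}\lambda\}\cap R_i|\le 2^{-n}\gamma|R_i|$, and one sums over $i$. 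To convert this into a norm bound, apply it to $g=f_N-(f_N)_{Q_0}$, where $f_N$ is the truncation of $f$ at height $N$. Since $g$ is bounded, $\Phi_N:=\|Mg\|_{L^{p,\infty}(Q_0)}^p$ is finite a priori, so the standard good-$\lambda$ bootstrap (split $\mu^p|\{Mg>\mu\}|$ at $\mu=2^{n+1}\lambda$, choose $\gamma=\gamma(n,p)$ small, and dispose of $\mu\lesssim\|f\|_{JN_p(Q_0)}|Q_0|^{-1/p}$ trivially) yields $\Phi_N\le c(n,p)\|M^{\#}g\|_{L^{p,\infty}(Q_0)}^p$. Because truncation is $1$-Lipschitz, $\fint_Q|f_N-(f_N)_Q|\le 2\fint_Q|f-f_Q|$, so $M^{\#}g=M^{\#}f_N\le 2M^{\#}f$, and the first step gives $\Phi_N\le c(n,p)\|f\|_{JN_p(Q_0)}^p$ uniformly in $N$. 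Finally $|g(x)|\le Mg(x)$ a.e.\ by Lebesgue differentiation along dyadic cubes, so $\|f_N-(f_N)_{Q_0}\|_{L^{p,\infty}(Q_0)}\le\Phi_N^{1/p}$, and letting $N\to\infty$ and using lower semicontinuity of $\|\cdot\|_{L^{p,\infty}}$ under a.e.\ convergence gives the theorem.

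The conceptual obstacle — and the reason no short argument is available — is that $|f-f_{Q_0}|$ is genuinely \emph{not} dominated pointwise by $M^{\#}f$: this is exactly the phenomenon $JN_p(Q_0)\supsetneq L^p(Q_0)$, the "drift" of the averages $f_Q$ as $Q$ shrinks can be large while each local oscillation $\fint_Q|f-f_Q|$ is small. The point is that this drift is controlled \emph{at the level of distribution functions}, and the good-$\lambda$ inequality is precisely what packages the cancellation needed to pass from $M^{\#}f\in L^{p,\infty}$ to $f-f_{Q_0}\in L^{p,\infty}$; the remaining care is the a priori finiteness (handled by the truncation and Fatou) and bookkeeping the dimensional constants in the bootstrap. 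An essentially equivalent route is to iterate the Calderón--Zygmund stopping-time decomposition of $f$ directly, re-centering at each generation and using that the selected cubes at a fixed generation are pairwise disjoint, so that their $JN_p$ sum is $\le\|f\|_{JN_p(Q_0)}^p$; the good-$\lambda$ formulation is just a clean way to organize that iteration.
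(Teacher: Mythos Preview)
The paper does not supply its own proof of this theorem: immediately after the statement it writes ``The proof can be found in \cite{aaltoberkovitskansanenyue, johnnirenberg} for example.'' So there is nothing in the paper to compare your argument against beyond those citations.

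Your argument is correct. The two steps are exactly the right ones: the observation that $JN_p$ membership forces $M^{\#}f\in L^{p,\infty}$ via the maximal dyadic cubes, and then a dyadic Fefferman--Stein good-$\lambda$ inequality to pass from $M^{\#}f\in L^{p,\infty}$ to $Mf\in L^{p,\infty}$, with truncation supplying the a priori finiteness needed to close the bootstrap. The details you indicate (the $2^n\lambda$ bound on the stopping cubes, the weak $(1,1)$ estimate inside each $R_i$, the Lipschitz property of truncation, Fatou at the end) all check out. The cited original proof in \cite{johnnirenberg} proceeds instead by iterating the Calder\'on--Zygmund decomposition directly and keeping track of the total measure at each generation via the $JN_p$ condition; as you yourself note in the last paragraph, this is essentially the same mechanism, and your good-$\lambda$ packaging is simply a cleaner way to organise the same recursion. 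Either route yields a constant $c(n,p)$ with the same dependence.
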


The proof can be found in \cite{aaltoberkovitskansanenyue, johnnirenberg} for example. In \cite{takala} this result was extended to the space $JN_p(\mathbb{R}^n)$.
Also a more general John-Nirenberg space $JN_{p,q}$ has been studied for example in \cite{dafnihytonenkorteyue,taoyangyuansurvey,taoyangyuancongruent}.
%One can also define the space $JN_p$ in a more general form as $JN_{p,q}$.

\begin{definition}[$JN_{p,q}$]
Let $1 \leq p < \infty$ and $1 \leq q < \infty$. A function $f$ is in $JN_{p,q}(X)$ if $f \in L_{loc}^1(X)$ and there is a constant $K < \infty$ such that
\begin{equation*}
\sum_{i=1}^{\infty} |Q_i| \left( \fint_{Q_i} | f - f_{Q_i} |^q \right)^{p/q} \leq K^p
\end{equation*}
for all countable collections of pairwise disjoint cubes $(Q_i)_{i=1}^{\infty}$ in $X$. We denote the smallest such number $K$ by $\|f\|_{JN_{p,q}(X)}$.
\end{definition}

%However this leads to nothing new, 
It was shown in \cite[Proposition 5.1]{dafnihytonenkorteyue} that if $X$ is a bounded cube, then $JN_{p,q}(X) = JN_p(X)$, if $1 \leq q < p$ and $JN_{p,q}(X) = L^q(X)$ if $p \leq q < \infty$.
The same proof also shows us that $JN_{p,q}(\mathbb{R}^n) = JN_p(\mathbb{R}^n)$, if $1 \leq q < p$.
It was shown in \cite[Corollary 2.8]{taoyangyuancongruent} that the space $JN_{p,q}(\mathbb{R}^n)$ contains only functions that are constant almost everywhere, if $p < q < \infty$. However it was stated in \cite[Remark 2.9]{taoyangyuancongruent} that the situation is unclear if $q = p$. We complete the picture by showing that $JN_{p,p}(\mathbb R^n)$ is equal to $L^p(\mathbb R^n)$ up to a constant. This also answers \cite[Question 15]{taoyangyuansurvey}.

\begin{proposition}
Let $1 \leq p < \infty$. Then $JN_{p,p}(\mathbb{R}^n) = L^p(\mathbb{R}^n) / \mathbb{R}$ and there is a constant $c = c(p)$ such that for any function $f \in L_{loc}^1(\mathbb{R}^n)$ we have
\begin{equation*}
\frac{1}{c} \| f \|_{JN_{p,p}(\mathbb{R}^n)}
\leq \inf_{b \in \mathbb{R}} \| f-b \|_{L^p(\mathbb{R}^n)}
\leq c \| f \|_{JN_{p,p}(\mathbb{R}^n)}.
\end{equation*}
\end{proposition}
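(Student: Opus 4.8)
The plan is to prove the two displayed inequalities separately. Observe first that adding a constant to $f$ changes neither side in a relevant way: for every cube $Q$ one has $(f-b)_Q = f_Q - b$, so $(f-b) - (f-b)_Q = f - f_Q$ and hence $\|f-b\|_{JN_{p,p}(\mathbb{R}^n)} = \|f\|_{JN_{p,p}(\mathbb{R}^n)}$. This makes the infimum over $b$ the natural quantity to compare with.

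For the inequality $\|f\|_{JN_{p,p}(\mathbb{R}^n)} \le c\inf_{b}\|f-b\|_{L^p(\mathbb{R}^n)}$ it therefore suffices, after subtracting a near-optimal $b$, to show $\|f\|_{JN_{p,p}(\mathbb{R}^n)} \le 2\|f\|_{L^p(\mathbb{R}^n)}$ for $f \in L^p(\mathbb{R}^n)$. For a single cube $Q$, Jensen's inequality gives $|Q|\,|f_Q|^p \le \int_Q |f|^p$, whence $\int_Q|f-f_Q|^p \le 2^{p-1}\big(\int_Q|f|^p + |Q|\,|f_Q|^p\big) \le 2^p\int_Q|f|^p$. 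Summing over a pairwise disjoint family $(Q_i)_i$ and using disjointness yields $\sum_i|Q_i|\big(\fint_{Q_i}|f-f_{Q_i}|^p\big) \le 2^p\|f\|_{L^p(\mathbb{R}^n)}^p$, which is the claim. This is just the endpoint case $q=p$ of the elementary embedding of $L^q$ into $JN_{p,q}$, and is the routine direction.

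For the reverse inequality we must produce, for $f \in JN_{p,p}(\mathbb{R}^n)$, a constant $b$ with $\|f-b\|_{L^p(\mathbb{R}^n)} \le c\,\|f\|_{JN_{p,p}(\mathbb{R}^n)}$. Write $K := \|f\|_{JN_{p,p}(\mathbb{R}^n)}$ and fix concentric cubes $Q^{(k)}$, $k \ge 0$, with $l(Q^{(k)}) = 2^k$, so that $Q^{(k)} \subset Q^{(k+1)}$ and $\bigcup_k Q^{(k)} = \mathbb{R}^n$. Testing the $JN_{p,p}$ condition on the single cube $Q^{(k)}$ gives $\int_{Q^{(k)}}|f - f_{Q^{(k)}}|^p \le K^p$. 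For consecutive cubes, Jensen and Hölder give
\[
|f_{Q^{(k)}} - f_{Q^{(k+1)}}| \le \fint_{Q^{(k)}}|f - f_{Q^{(k+1)}}| \le |Q^{(k)}|^{-1/p}\Big(\int_{Q^{(k)}}|f - f_{Q^{(k+1)}}|^p\Big)^{1/p} \le 2^{-kn/p}K,
\]
where the last step uses $Q^{(k)} \subset Q^{(k+1)}$. Since $2^{-n/p} < 1$, the sequence $(f_{Q^{(k)}})_k$ is Cauchy; let $b$ be its limit. Telescoping the previous estimate gives $|f_{Q^{(k)}} - b| \le c\,2^{-kn/p}K$, hence $\int_{Q^{(k)}}|f_{Q^{(k)}} - b|^p = |Q^{(k)}|\,|f_{Q^{(k)}} - b|^p \le c^p K^p$ with $c = c(p)$. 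Combining with the oscillation bound, $\int_{Q^{(k)}}|f - b|^p \le 2^{p-1}(1 + c^p)K^p$ uniformly in $k$, and monotone convergence as $k \to \infty$ gives $\|f-b\|_{L^p(\mathbb{R}^n)} \le c(p)\,K$. The set identity $JN_{p,p}(\mathbb{R}^n) = L^p(\mathbb{R}^n)/\mathbb{R}$ then follows: one containment is the $b$ just constructed, and conversely if $f-b \in L^p(\mathbb{R}^n)$ then $\|f\|_{JN_{p,p}(\mathbb{R}^n)} = \|f-b\|_{JN_{p,p}(\mathbb{R}^n)} \le 2\|f-b\|_{L^p(\mathbb{R}^n)} < \infty$.

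The argument uses nothing beyond Jensen's inequality, Hölder's inequality, and a geometric series, so there is no serious obstacle here; the one point requiring attention is the construction of the constant $b$ from the exhausting cubes together with the decay rate $2^{-kn/p}$, since that is exactly what forces the telescoping sum to converge and makes the final $L^p$ bound uniform in $k$.
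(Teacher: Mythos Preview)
Your proof is correct and follows essentially the same approach as the paper's: both directions are handled identically, with the nontrivial one proved by showing that the averages $f_{Q^{(k)}}$ over an exhausting family of concentric cubes form a Cauchy sequence with geometric decay, identifying $b$ as the limit, and then bounding $\int_{Q^{(k)}}|f-b|^p$ uniformly in $k$ via the triangle inequality. The only cosmetic difference is that the paper takes cubes with $|Q_k|=2^k$ (volume doubling) while you take $l(Q^{(k)})=2^k$ (side-length doubling), which changes the decay exponent from $2^{-k/p}$ to $2^{-kn/p}$ but is otherwise immaterial.
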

\begin{proof}
First assume that $f \in L^p / \mathbb{R}$, that is there is a constant $b$ such that $f - b \in L^p$. Then for any set of pairwise disjoint cubes $Q_i$
\begin{equation*}
\sum_{i=1}^{\infty} \int_{Q_i} | f - f_{Q_i} |^p
= \sum_{i=1}^{\infty} \int_{Q_i} | (f-b) - (f-b)_{Q_i} |^p
\leq \sum_{i=1}^{\infty} 2^{p} \int_{Q_i} |f-b|^p
\leq 2^p \int_{\mathbb{R}^n} |f-b|^p
\end{equation*}
and therefore $f \in JN_{p,p}(\mathbb{R}^n)$.
\\
Now assume that $f \in JN_{p,p}(\mathbb{R}^n)$. Clearly
\begin{equation*}
\int_Q |f - f_Q|^p
\leq \| f \|_{JN_{p,p}(\mathbb{R}^n)}^p
\end{equation*}
for every cube $Q \subset \mathbb{R}^n$.
Let $(Q_k)_{k=1}^{\infty}$ be a sequence of cubes such that the center of every cube is the origin and $|Q_k| = 2^k$. Then $Q_1 \subset Q_2 \subset ...$ and $\cup_{k=1}^{\infty} Q_k = \mathbb{R}^n$. We shall prove that the sequence of integral averages $(f_{Q_k})_{k=1}^{\infty}$ is a Cauchy sequence. For any integer $i$ we have
\begin{equation*}
|f_{Q_i} - f_{Q_{i+1}}|
\leq \fint_{Q_i} |f - f_{Q_{i+1}}|
\leq 2 \fint_{Q_{i+1}} |f - f_{Q_{i+1}}|.
\end{equation*}
This means that
\begin{equation*}
\begin{split}
|f_{Q_i} - f_{Q_{i+1}}|^p
&\leq 2^p \left( \fint_{Q_{i+1}} |f - f_{Q_{i+1}}| \right)^p
\leq 2^p \fint_{Q_{i+1}} |f - f_{Q_{i+1}}|^p
\\
&\leq 2^{p-i-1} \| f \|_{JN_{p,p}(\mathbb{R}^n)}^p.
\end{split}
\end{equation*}
Then for any positive integers $m$ and $k$ we get
\begin{equation}\label{eqn:cauchyspeed}
\begin{split}
|f_{Q_m} - f_{Q_k}|
&\leq \sum_{i = \min(m,k)}^{\max(m,k) - 1} |f_{Q_{i+1}} - f_{Q_i}|
\leq \sum_{i = \min(m,k)}^{\infty} 2^{1 - \frac{1}{p}} \| f \|_{JN_{p,p}(\mathbb{R}^n)} 2^{-i/p}
\\
&= c \| f \|_{JN_{p,p}(\mathbb{R}^n)} 2^{-\min(m,k)/p},
\end{split}
\end{equation}
where the constant $c$ depends only on $p$. Therefore $(f_{Q_k})_{k=1}^{\infty}$ is a Cauchy sequence. Then by using \eqref{eqn:cauchyspeed} we get
\begin{align*}
\int_{\mathbb{R}^n} \left| f - \lim_{k \to \infty} f_{Q_k} \right|^p
&= \lim_{m \to \infty} \int_{Q_m} \left| f - \lim_{k \to \infty} f_{Q_k} \right|^p
\\
&\leq \lim_{m \to \infty} \int_{Q_m} 2^{p-1} \left( |f - f_{Q_m}|^p + \left| f_{Q_m} - \lim_{k \to \infty} f_{Q_k} \right|^p \right)
\\
&\leq \lim_{m \to \infty} 2^{p-1} \left( \| f \|_{JN_{p,p}(\mathbb{R}^n)}^p + 2^m \lim_{k \to \infty} \left| f_{Q_m} - f_{Q_k} \right|^p \right)
\\
&\leq 2^{p-1} \| f \|_{JN_{p,p}(\mathbb{R}^n)}^p \left( 1 + \lim_{m \to \infty} 2^m \lim_{k \to \infty} c 2^{-\min(m,k)} \right)
\\
&= 2^{p-1} \| f \|_{JN_{p,p}(\mathbb{R}^n)}^p \left( 1 + \lim_{m \to \infty} 2^m c 2^{-m} \right)
= c \| f \|_{JN_{p,p}(\mathbb{R}^n)}^p,
\end{align*}
where the constant $c$ depends only on $p$.
This means that $f - \lim_{k \to \infty} f_{Q_k} \in L^p$ and therefore $f \in L^p / \mathbb{R}$. This completes the proof.

\end{proof}

The spaces $VJN_p$ and $CJN_p$ were studied in \cite{taoyangyuanvjnp, takala}. These spaces are $JN_p$ counterparts of the spaces $VMO$ and $CMO$, which are subspaces of $BMO$.
The spaces $VJN_p$ and $CJN_p$ can also be defined in a bounded cube $Q_0$ instead of $\mathbb{R}^n$ as in the following definitions. However in that case it is clear that the spaces coincide \cite{taoyangyuanvjnp, takala}. In this paper we only define the spaces in $\mathbb{R}^n$ and we write $VJN_p = VJN_p(\mathbb{R}^n)$ and $CJN_p = CJN_p(\mathbb{R}^n)$ to simplify the notation.

\begin{definition}[$VJN_p$]
Let $1 < p < \infty$. Then the vanishing subspace $VJN_p$ of $JN_p$ is defined by setting
\begin{equation*}
VJN_p
:= \overline{D_p(\mathbb{R}^n) \cap JN_p}^{JN_p},
\end{equation*}
where
\begin{equation*}
D_p(\mathbb{R}^n)
:= \{ f \in C^{\infty}(\mathbb{R}^n) : |\nabla f| \in L^p(\mathbb{R}^n) \}.
\end{equation*}
\end{definition}

\begin{definition}[$CJN_p$]
Let $1 < p < \infty$. Then the subspace $CJN_p$ of $JN_p$ is defined by setting
\begin{equation*}
CJN_p
:= \overline{C_c^{\infty}(\mathbb{R}^n) }^{JN_p},
\end{equation*}
where $C_c^{\infty}(\mathbb{R}^n)$ denotes the set of smooth functions with compact support in $\mathbb{R}^n$.
\end{definition}

As in the case of vanishing subspaces of $BMO$, there exist characterizations of $VJN_p$ and $CJN_p$ as $JN_p$ functions for which certain integrals vanish, see \cite[Theorems 3.2 and 4.3]{taoyangyuanvjnp}.
\begin{theorem}
Let $1 < p < \infty$. Then $f \in VJN_p$ if and only if $f \in JN_p$ and
\begin{equation*}
\lim_{a \to 0}
\sup_{\substack{Q_i \subset \mathbb{R}^n \\  l(Q_i) \leq a}}
\sum_{i=1}^{\infty} |Q_i| \left( \fint_{Q_i} | f - f_{Q_i} | \right)^p
= 0,
\end{equation*}
where the supremum is taken over all collections of pairwise disjoint cubes $(Q_i)_{i=1}^{\infty}$ in $\mathbb{R}^n$, such that the side length of each $Q_i$ is at most $a$.
\end{theorem}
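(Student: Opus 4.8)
The plan is to prove the two inclusions separately, after reformulating the vanishing condition. For $a>0$ write $\Phi_a(f)$ for the supremum of $\big(\sum_i|Q_i|(\fint_{Q_i}|f-f_{Q_i}|)^p\big)^{1/p}$ over all countable pairwise disjoint families $(Q_i)$ of cubes with $l(Q_i)\le a$. Then $\Phi_a(f)$ is nondecreasing in $a$ and $\Phi_a(f)\le\|f\|_{JN_p(\mathbb{R}^n)}$, so $\lim_{a\to0}\Phi_a(f)$ exists; let $\mathcal G$ be the set of $f\in JN_p(\mathbb{R}^n)$ for which this limit is $0$. The theorem asserts $VJN_p=\mathcal G$.

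For the inclusion $VJN_p\subseteq\mathcal G$ I would first check that $\mathcal G$ is a closed linear subspace of $JN_p$: subadditivity $\Phi_a(f+g)\le\Phi_a(f)+\Phi_a(g)$ follows from the pointwise bound $|(f+g)-(f+g)_{Q_i}|\le|f-f_{Q_i}|+|g-g_{Q_i}|$ and Minkowski's inequality in $\ell^p$, which gives linearity, and closedness follows from $\Phi_a(f)\le\Phi_a(g)+\|f-g\|_{JN_p}$ together with $\Phi_a(h)\le\|h\|_{JN_p}$. Next, $D_p(\mathbb{R}^n)\cap JN_p\subseteq\mathcal G$: for $g$ smooth with $|\nabla g|\in L^p$, the $L^p$-Poincar\'e inequality gives $\fint_Q|g-g_Q|\le c_n\,l(Q)\,(\fint_Q|\nabla g|^p)^{1/p}$, so for disjoint cubes with $l(Q_i)\le a$ one has $\sum_i|Q_i|(\fint_{Q_i}|g-g_{Q_i}|)^p\le c_n^p a^p\||\nabla g|\|_{L^p(\mathbb{R}^n)}^p\to0$. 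Since $VJN_p$ is by definition the $JN_p$-closure of $D_p\cap JN_p$, these two facts yield $VJN_p\subseteq\mathcal G$.

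The reverse inclusion $\mathcal G\subseteq VJN_p$ is the substantial direction. Fix $f\in\mathcal G$ and let $f_\eps:=f*\varphi_\eps$ for a standard mollifier; the goal is to show $f_\eps\in D_p\cap JN_p$ and $\|f-f_\eps\|_{JN_p}\to0$ as $\eps\to0$. For the membership, $f_\eps$ is smooth, and using $\int\nabla\varphi_\eps=0$ one has the pointwise estimate $|\nabla f_\eps(x)|\lesssim_n\eps^{-1}\fint_{B(x,\eps)}|f-f_{B(x,\eps)}|$; covering $\mathbb{R}^n$ by a mesh of cubes of side $\sim\eps$ and using that $f\in JN_p$ (not merely $f\in L^{p,\infty}$) together with the bounded overlap of mildly enlarged mesh cubes shows the right-hand side lies in $L^p(\mathbb{R}^n)$, so $f_\eps\in D_p$. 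For the norm estimate, split an arbitrary disjoint family $(Q_i)$ according to whether $l(Q_i)\ge\eps$ or $l(Q_i)<\eps$. On a cube $Q$ with $l(Q)\ge\eps$, writing $f-f_\eps=\int\varphi_\eps(y)(f-f(\cdot-y))\,dy$, partitioning $Q$ into subcubes $R$ of side $\sim\eps$, and comparing $f(x)$ and $f(x-y)$ to the mean over a concentric enlargement $R'$, one gets $\fint_Q|(f-f_\eps)-(f-f_\eps)_Q|\lesssim|Q|^{-1}\sum_{R\subset Q}|R'|\fint_{R'}|f-f_{R'}|$; Hölder's inequality in the sum over $R$, using $\sum_{R\subset Q}|R'|\sim|Q|$ and $p/p'=p-1$, then gives $|Q|\big(\fint_Q|(f-f_\eps)-(f-f_\eps)_Q|\big)^p\lesssim_{n,p}\sum_{R\subset Q}|R'|(\fint_{R'}|f-f_{R'}|)^p$, and summing over $i$ the cubes $R'$ all have side $\sim\eps$ and bounded overlap, so this part is $\lesssim_{n,p}\Phi_{c_n\eps}(f)^p$. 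On a cube $Q$ with $l(Q)<\eps$, the triangle inequality splits the contribution into the $\fint_Q|f-f_Q|$ part, which sums to at most $\Phi_\eps(f)^p$, and the $\fint_Q|f_\eps-(f_\eps)_Q|$ part, controlled by $\fint_Q|f_\eps-(f_\eps)_Q|\lesssim l(Q)\sup_Q|\nabla f_\eps|\lesssim_n\frac{l(Q)}{\eps}\fint_{Q^*}|f-f_{Q^*}|$ with $Q^*\supset Q$ of side $\sim\eps$; grouping the $Q_i$ by the $\eps$-mesh cube containing them, so that $\sum_{Q_i\subset M}l(Q_i)^n\le|M|$, sends this part to $\lesssim_{n,p}\Phi_{c_n\eps}(f)^p$ as well. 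Altogether $\|f-f_\eps\|_{JN_p}\lesssim_{n,p}\Phi_{c_n\eps}(f)\to0$; in particular $f_\eps\in JN_p$, hence $f_\eps\in D_p\cap JN_p$, and $f\in\overline{D_p\cap JN_p}^{JN_p}=VJN_p$.

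I expect the reverse inclusion to be the main obstacle, and within it the delicate point that mollification genuinely uses the full strength of $f\in JN_p(\mathbb{R}^n)$: for a general $f\in L^{p,\infty}(\mathbb{R}^n)$ neither $|\nabla f_\eps|\in L^p$ nor $f_\eps\to f$ in $JN_p$ need hold, and it is precisely the summability over disjoint cubes that rescues both. The technical heart is the bookkeeping that lets the $\ell^p$-summations be localized — organizing the cube collections by scale and by an $\eps$-mesh so that the quantity $\Phi_{c_n\eps}(f)^p$ is not charged in full to a single cube — rather than the (routine) pointwise, Poincar\'e, and bounded-overlap estimates themselves.
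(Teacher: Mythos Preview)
The paper does not prove this theorem; it is quoted as a known characterization, with the proof deferred to \cite[Theorems~3.2 and~4.3]{taoyangyuanvjnp}. So there is no ``paper's own proof'' to compare against here.

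Your proposal is a self-contained mollification argument and is essentially correct. The easy direction ($VJN_p\subseteq\mathcal G$) is exactly as you say: the Poincar\'e inequality puts $D_p\cap JN_p$ inside $\mathcal G$, and $\mathcal G$ is closed by the subadditivity $\Phi_a(f)\le\Phi_a(g)+\|f-g\|_{JN_p}$. For the converse, your two key computations go through: the gradient bound $|\nabla f_\eps(x)|\lesssim\eps^{-1}\fint_{B(x,\eps)}|f-f_{B(x,\eps)}|$ combined with a mesh of side $\eps$ and bounded overlap of the enlarged mesh cubes gives $\|\nabla f_\eps\|_{L^p}\lesssim\eps^{-1}\|f\|_{JN_p}$, and the scale-splitting estimate for $\|f-f_\eps\|_{JN_p}$ is sound. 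A couple of places to tighten when you write it out: in Case~1 the inequality you actually use is $\fint_Q|(f-f_\eps)-(f-f_\eps)_Q|\le 2\fint_Q|f-f_\eps|$, after which the subcube decomposition and H\"older give the stated bound; and in Case~2 the grouping should be by, say, the mesh cube containing the \emph{center} of $Q_i$ (a cube with $l(Q_i)<\eps$ can straddle two mesh cubes), which still yields $Q_i\subset 2M$ and the same conclusion. With these cosmetic fixes the argument gives $\|f-f_\eps\|_{JN_p}\lesssim_{n,p}\Phi_{c_n\eps}(f)\to 0$, hence $f\in VJN_p$.

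This is in fact the approach taken in the cited reference (mollify and control via the small-scale $JN_p$ modulus), so although the paper itself gives no proof, your route matches the one in the literature.
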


\begin{theorem}
\label{cjnplause}
Let $1 < p < \infty$. Then $f \in CJN_p$ if and only if $f \in VJN_p$ and
\begin{equation*}
\lim_{a \to \infty}
\sup_{\substack{Q \subset \mathbb{R}^n \\  l(Q) \geq a}}
|Q|^{1/p} \fint_Q |f - f_Q|
= 0,
\end{equation*}
where the supremum is taken over all cubes $Q \subset \mathbb{R}^n$ such that the side length of $Q$ is at least $a$.
\end{theorem}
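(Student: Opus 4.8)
The plan is to prove the two implications of the equivalence separately; the forward one is routine and all of the difficulty sits in the reverse one. For the forward direction, note that every $\varphi\in C_c^\infty(\mathbb R^n)$ lies in $L^p(\mathbb R^n)\subseteq JN_p(\mathbb R^n)$ and has $|\nabla\varphi|\in L^p(\mathbb R^n)$, so $C_c^\infty(\mathbb R^n)\subseteq D_p(\mathbb R^n)\cap JN_p(\mathbb R^n)$ and hence $CJN_p\subseteq VJN_p$ after taking $JN_p$-closures. For the large-cube limit I would first observe that for $\varphi\in C_c^\infty(\mathbb R^n)$,
\[
|Q|^{1/p}\fint_Q|\varphi-\varphi_Q|\;\le\;2|Q|^{\frac1p-1}\|\varphi\|_{L^1(\mathbb R^n)}\;\xrightarrow[\;l(Q)\to\infty\;]{}\;0
\]
uniformly (since $\tfrac1p-1<0$), and then, by applying Definition~\ref{jnpmaaritelma} to the single collection $\{Q\}$, that $\bigl||Q|^{1/p}\fint_Q|f-f_Q|-|Q|^{1/p}\fint_Q|g-g_Q|\bigr|\le\|f-g\|_{JN_p(\mathbb R^n)}$, so $f\mapsto\sup_{l(Q)\ge a}|Q|^{1/p}\fint_Q|f-f_Q|$ is $1$-Lipschitz on $JN_p(\mathbb R^n)$ for each fixed $a$; hence the vanishing of its $a\to\infty$ limit, valid for $C_c^\infty$ functions, passes to the closure $CJN_p$.

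For the reverse direction, fix $f\in VJN_p$ satisfying the large-cube limit and $\varepsilon>0$, and write $Q_r$ for the cube of side $r$ centred at the origin. The first step is to upgrade the hypothesis to a quantitative decay of the local $L^1$-mass of $f$: from the telescoping bound $|f_{Q_{2^k}}-f_{Q_{2^{k+1}}}|\le 2^n\fint_{Q_{2^{k+1}}}|f-f_{Q_{2^{k+1}}}|$ together with the large-cube limit one gets that $(f_{Q_{2^k}})_k$ converges, to $L$ say, with $|f_{Q_r}-L|=o(r^{-n/p})$; replacing $f$ by $f-L$ (which changes neither $\|f\|_{JN_p}$, nor membership in $VJN_p$, nor the large-cube limit) we may assume $f_{Q_r}\to0$, and then
\[
\int_{Q_r}|f|\;\le\;|Q_r|\,|f_{Q_r}|+\int_{Q_r}|f-f_{Q_r}|\;=\;o\!\left(r^{\,n(1-1/p)}\right)\qquad(r\to\infty).
\]
This is strictly stronger than the weak-$L^p$ bound of Theorem~\ref{johnnirenberglemma} — it fails for generic weak-$L^p$ functions — and it is exactly what makes the truncation below converge. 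Next, using $f\in VJN_p$, I would pick $g\in D_p(\mathbb R^n)\cap JN_p(\mathbb R^n)$ with $\|f-g\|_{JN_p}<\varepsilon$, normalised so that $g_{Q_r}\to0$; then $g$ inherits, up to an additive $O(\varepsilon)$, both the large-cube limit and the mass-decay, and $g$ will supply $C_c^\infty$ approximants after truncation.

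The core step is to truncate and estimate. Fix $\eta\in C_c^\infty(\mathbb R^n)$ with $\eta\equiv1$ on $Q_1$, $\supp\eta\subseteq Q_2$, $0\le\eta\le1$, set $\eta_R(x)=\eta(x/R)$ (so $|\nabla\eta_R|\lesssim1/R$), and note $g\eta_R\in C_c^\infty(\mathbb R^n)$; it remains to show $\limsup_{R\to\infty}\|g-g\eta_R\|_{JN_p}\lesssim\varepsilon$, which places $f$ in $CJN_p$. Given a pairwise disjoint family $(Q_i)$ and $h=g(1-\eta_R)$, I would split the cubes into: (i) those contained in $Q_R$, contributing $0$ since $h\equiv0$ there; (ii) those disjoint from $Q_{R/2}$, where one uses the automatic tail-vanishing of $JN_p(\mathbb R^n)$ — for any $u\in JN_p(\mathbb R^n)$ the supremum of $\sum_j|Q_j|\bigl(\fint_{Q_j}|u-u_{Q_j}|\bigr)^p$ over disjoint families avoiding $Q_\rho$ tends to $0$ as $\rho\to\infty$, because $\|u\|_{JN_p}^p$ is already attained, up to $\varepsilon$, by finitely many cubes lying in a fixed cube — after the splitting $h-h_{Q_i}=(1-\bar\eta_i)(g-g_{Q_i})+E_i$, where $\bar\eta_i$ is the value of $\eta_R$ at the centre of $Q_i$ and $\fint_{Q_i}|E_i|\lesssim\min(1,l(Q_i)/R)\fint_{Q_i}|g|$ (so $E_i\neq0$ only for cubes meeting the transition region $\overline{Q_{2R}}\setminus Q_R$), controlling the principal terms by tail-vanishing and the error terms $E_i$ by the mass-decay; and (iii) those meeting $Q_{R/2}$ but not contained in $Q_R$ — such a cube straddles from inside $Q_{R/2}$ to outside $Q_R$, hence has side $\gtrsim R$, so there are only $O_n(1)$ of them, and each is controlled by the large-cube limit for $g$ together with $|Q_i|^{1/p-1}\int_{Q_{CR}}|g|\lesssim\varepsilon$. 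Summing the three groups and letting first $R\to\infty$, then $\varepsilon\to0$, finishes the argument.

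I expect the hard part to be the bookkeeping for the cubes attached to the transition region $\overline{Q_{2R}}\setminus Q_R$ in groups (ii)–(iii): one has to absorb simultaneously the (possibly numerous) small such cubes, the $O_n(1)$ large straddling ones, and — the real difficulty — the intermediate-scale ones whose side is comparable to $R$, on which neither the Poincaré inequality (too lossy, because of the factor $l(Q_i)^p\sim R^p$) nor the crude weak-$L^p$ bound by itself suffices. It is precisely here that both hypotheses must be used together: $f\in VJN_p$ to produce the smooth approximant $g$, and the sharp mass-decay $\int_{Q_r}|f|=o(r^{n(1-1/p)})$ — a consequence of the large-cube limit that goes beyond Theorem~\ref{johnnirenberglemma} — to make the error terms $E_i$ and the straddling cubes summable. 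A direct truncation of $f$, or the use of the weak-$L^p$ bound in place of the mass-decay, breaks down at exactly this point.
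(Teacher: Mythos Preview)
The paper does not prove Theorem~\ref{cjnplause}; it is quoted from \cite[Theorem~4.3]{taoyangyuanvjnp} and used only as input to Corollary~\ref{cjnpvjnptulos}. There is therefore no in-paper argument to compare your proposal against.

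On the proposal itself: the forward direction is correct as written. For the reverse direction, the architecture --- normalise so that $f_{Q_r}\to 0$, extract the mass-decay $\int_{Q_r}|f|=o\bigl(r^{\,n(1-1/p)}\bigr)$, pass to a smooth approximant $g\in D_p\cap JN_p$, truncate by $\eta_R$, and split the test cubes into your three groups --- is sound, and your ``automatic tail-vanishing'' observation (a near-extremal family for $\|g\|_{JN_p}^p$ is finite and sits in some fixed $Q_\rho$, so any disjoint family avoiding $Q_\rho$ contributes at most $\varepsilon$) is both correct and exactly what replaces the third ``far-cube'' condition familiar from the $CMO$ setting.

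The step that is not closed is the one you yourself flag: summing the error terms $E_i$ over cubes attached to the transition annulus with $l(Q_i)\le R$. Your bound $\fint_{Q_i}|E_i|\lesssim (l(Q_i)/R)\,\fint_{Q_i}|g|$ combined with the weak-$L^p$ control $|Q_i|^{1/p}\fint_{Q_i}|g|\le C$ gives a per-cube contribution $\lesssim (l(Q_i)/R)^p$; but the annulus $Q_{4R}\setminus Q_{R/2}$ can accommodate $\sim (R/l)^n$ disjoint cubes of side $l$, so the scale-$l$ total is $\sim (l/R)^{p-n}$, which is summable over dyadic scales only when $p>n$. The global mass-decay $\int_{Q_{CR}}|g|\lesssim\varepsilon R^{\,n(1-1/p)}$ is a single $L^1$ bound and does not by itself control $\sum_i|Q_i|\bigl(\fint_{Q_i}|g|\bigr)^p$ over many small disjoint cubes (that sum need not be finite for a generic $L^{p,\infty}$ function). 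So for $p\le n$ the plan, as stated, does not close; some further ingredient --- for instance a localised $L^p$ bound on $g$ in the annulus extracted from $|\nabla g|\in L^p$ via Sobolev--Poincar\'e, or a different truncation scheme --- is needed. Since the result is cited rather than proved here, you would have to consult \cite{taoyangyuanvjnp} for the complete argument.
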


From the definitions we can see that $L^p / \mathbb{R} \subseteq CJN_p \subseteq VJN_p \subseteq JN_p$. It was shown in \cite{takala} that $L^p / \mathbb{R} \neq CJN_p$ and $VJN_p \neq JN_p$. However the question of whether $CJN_p$ and $VJN_p$ coincide remained open. 

\section{Equality of $VJN_p$ and $CJN_p$}

Let $1 < p < \infty$. In this section we prove the equality of $VJN_p$ and $CJN_p$ by showing that for any $JN_p$ function $f$, integrals of the type
\begin{equation*}
|Q|^{\frac{1}{p}} \fint_Q |f|
\end{equation*}
tend to zero both when $|Q| \to 0$ and when $|Q| \to \infty$.
This type of integral appears in the Morrey norm, see for example \cite{rafeirosamkosamko}. Compare it also to the weak $L^p$ norm (Definition \ref{weaklpdefinition}), where the supremum is taken over such integrals with the cube $Q$ replaced with an arbitrary measurable set.

%We call this a weak $L^p$ type integral, because the expression is similar to the weak $L^p$ norm, see Definition \ref{weaklpdefinition}.
%It is important to note that in Definition \ref{weaklpdefinition} the set $E$ is an arbitrary measurable set, but in this section $Q$ denotes a cube. 

The aforementioned results follow from Proposition \ref{paatulos}. 

\begin{proposition}
\label{paatulos}
Let $X \subseteq \mathbb{R}^n$ be either a bounded cube or the entire space $\mathbb{R}^n$. Let $Q \subset X$ be a cube such that $3 Q \subseteq X$. Let $1 < p < \infty$, $0 < A < \infty$ and $0 < \epsilon \leq \epsilon_0(n,p)$. Suppose that $f \in L_{loc}^1(X)$ is a nonnegative function such that
\begin{equation}
\label{firstinequality}
|Q|^{\frac{1}{p}} \fint_Q f
\geq A (1 - \epsilon)
\end{equation}
and for any cube $Q' \subset X$ with $l(Q') = \frac{2}{3} l(Q)$ or $l(Q') = \frac{4}{3} l(Q)$ we have
\begin{equation}
\label{secondinequality}
|Q'|^{\frac{1}{p}} \fint_{Q'} f
\leq A (1 + \epsilon).
\end{equation}
Then there exist two cubes $Q_1 \subset 3 Q$ and $Q_2 \subset 3Q$ such that for $i \in \{ 1 , 2\}$ we have
\begin{align}
\label{ekaehto}
&  l(Q_i) = \frac{2}{3} l(Q) \text{  or  } l(Q_i) = \frac{4}{3} l(Q),
\\
\label{tokaehto}
& \dist(Q_1 , Q_2) \geq \frac{1}{3} l(Q), \textrm{ and}
\\
& |Q_i|^{\frac{1}{p}} \fint_{Q_i} |f - f_{Q_i}|
\geq c \cdot A,
\label{kolmasehto}
\end{align}
where $c = c(n,p)$ is a positive constant.

\end{proposition}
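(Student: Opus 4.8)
The plan is to localize to $3Q$, reduce to a normalized model, and then exhibit two well‑separated cubes on which $f$ is visibly non‑constant. First I would rescale and translate so that $l(Q)=1$ with $Q$ centered at the origin, and normalize so that $A=1$; then $f\ge 0$, \eqref{firstinequality} reads $\fint_Q f\ge 1-\epsilon$, \eqref{secondinequality} reads $\fint_{Q'} f\le (1+\epsilon)|Q'|^{-1/p}$ for every cube $Q'\subseteq X$ with $l(Q')\in\{\tfrac23,\tfrac43\}$, and it suffices to find cubes $Q_1,Q_2\subset 3Q$ of side length $\tfrac23$ or $\tfrac43$ with $\dist(Q_1,Q_2)\ge\tfrac13$ and $\fint_{Q_i}|f-f_{Q_i}|\ge c(n,p)$. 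Every cube used below lies in $3Q\subseteq X$, so \eqref{secondinequality} applies to it. I would also record an elementary oscillation bound: if a cube $R$ contains disjoint sets $E^+$ with $\fint_{E^+} f\ge\beta$ and $E^-$ with $\fint_{E^-} f\le\alpha$, both of measure $\ge\theta|R|$, then $\fint_R|f-f_R|\ge\tfrac12\theta(\beta-\alpha)$.

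The two estimates that drive the proof, valid once $\epsilon\le\epsilon_0(n,p)$, are: \textbf{(i) a density drop:} for \emph{any} side‑length‑$\tfrac43$ cube $Q^{+}\supseteq Q$ (all such lie in $3Q$), \eqref{secondinequality} gives $\fint_{Q^{+}}f\le(1+\epsilon)(4/3)^{-n/p}$ while $\fint_Q f\ge1-\epsilon$; since $(4/3)^{-n/p}<1$ this yields $\fint_{Q^{+}}f\le\gamma\fint_Q f$ and $\fint_{Q^{+}\setminus Q}f\le\gamma\fint_Q f$ with $\gamma=\gamma(n,p)<1$. \textbf{(ii) anti‑concentration:} for any side‑length‑$\tfrac23$ cube $R$, \eqref{secondinequality} gives $\int_R f\le(1+\epsilon)(2/3)^{n/p'}<1-\epsilon\le\int_Q f$, hence $\int_{Q\setminus R} f\ge\delta(n,p)>0$; more generally, since a box with all sides $\le\tfrac23$ sits inside a side‑length‑$\tfrac23$ cube, the mass of $f$ on $Q$ cannot be carried by such a box and thus spreads at scale comparable to $l(Q)$. (The dependence of $\epsilon_0$ on $n,p$ is genuine because $(4/3)^{-n/p}$ and $(2/3)^{n/p'}$ can be close to $1$, which the statement permits.)

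Next I would produce one cube with large oscillation. Suppose, for contradiction, that $\fint_R|f-f_R|<\eta$ for every side‑length‑$\tfrac23$ cube $R\subset\tfrac43 Q$. Any two such cubes are joined by a chain of $O_n(1)$ side‑length‑$\tfrac23$ cubes in $\tfrac43 Q$, consecutive ones overlapping in measure $\gtrsim_n|R|$; comparing integral averages along the chain gives $|f_R-f_{R'}|\lesssim_n\eta$ for all of them, so $f$ agrees with a single constant $c$ up to $L^1(\tfrac43 Q)$‑error $\lesssim_n\eta$, whence $|\fint_Q f-\fint_{\frac43 Q}f|\lesssim_n\eta$. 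By (i) this difference is $\ge(1-\gamma)(1-\epsilon)\gtrsim_{n,p}1$, forcing $\eta\gtrsim_{n,p}1$. Thus some side‑length‑$\tfrac23$ cube $R^{*}\subset\tfrac43 Q\subset 3Q$ has $\fint_{R^{*}}|f-f_{R^{*}}|\ge c_0(n,p)$; by running the argument inside a side‑length‑$\tfrac43$ cube $Q^{+}$ flush to a face of $Q$ one can moreover locate such a cube straddling $Q$ and a low‑density shell region $Q^{+}\setminus Q$, which is the form I will use.

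The second cube, far from the first, is where the real difficulty lies, and I expect this to be the main obstacle. The intended route is a coordinate‑by‑coordinate dichotomy fed by (ii): in each direction $e_j$ ask whether the mass of $f$ on $Q$ is \emph{split}, i.e. a fixed fraction lies on each side of a central level, positioned so that the two pockets can be enclosed in two side‑length‑$\tfrac23$ cubes whose ``outer'' sides in $3Q$ reach into low‑density shells of the appropriate flush side‑length‑$\tfrac43$ cubes and are $\ge\tfrac13$ apart. If some direction is split, run the estimate of the previous paragraph inside the two opposite flush side‑length‑$\tfrac43$ cubes to obtain two oscillating cubes on opposite sides, at distance $\ge\tfrac13$, each straddling a heavy pocket and a light shell, and conclude via the elementary oscillation bound together with (i). If \emph{no} direction is split, the mass collapses toward a box with all sides $\le\tfrac23$, contradicting (ii) once $\epsilon\le\epsilon_0(n,p)$; so some split direction must occur. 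The delicate point — and the reason this step is hard — is the geometric bookkeeping: one must choose the levels, the flush side‑length‑$\tfrac43$ cubes, and the two side‑length‑$\tfrac23$ cubes so that simultaneously $\dist(Q_1,Q_2)\ge\tfrac13 l(Q)$ is honest, each $Q_i$ genuinely contains a heavy pocket, and each $Q_i$ genuinely contains a light region where (i) bites, all while losing only $n,p$‑dependent constants. Combining the constants from (i), (ii), the chaining, and the dichotomy produces the asserted $c=c(n,p)$.
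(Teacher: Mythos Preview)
Your normalization, the density drop (i), the anti-concentration (ii), and the chaining argument that produces \emph{one} oscillating side-$\tfrac23$ cube in $\tfrac43 Q$ are all sound. The gap is in the second-cube step, and it is structural, not just bookkeeping.

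Your dichotomy conflates two notions of ``split in direction $e_j$''. For the collapse-to-a-small-box contradiction with (ii) you use the weak notion (each half of $Q$ perpendicular to $e_j$ carries a fixed fraction of the mass). To actually build two $\tfrac13$-separated oscillating cubes you need the strong notion (the mass sits in two pockets that can be enclosed in side-$\tfrac23$ cubes on opposite $e_j$-sides, each also meeting the light shell). These are not the same, and the weak split in direction $e_j$ does \emph{not} yield separation in direction $e_j$. Concretely, let $n\ge 2$, $1<p\le n$, and take $f=\tfrac{1}{2\delta}\mathbf{1}_{\{|x_1|<\delta\}\cap Q}$ with $\delta$ small; one checks that \eqref{firstinequality} and \eqref{secondinequality} hold with $A=1$ for small $\epsilon$. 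The mass is weakly split in \emph{every} direction, yet every cube $R$ of side $\tfrac23$ or $\tfrac43$ with $|R|^{1/p}\fint_R|f-f_R|$ bounded below must satisfy $0\in P_1(R)$, so no two such cubes can be $\tfrac13$-separated along $e_1$. Your recipe, fed the split direction $e_1$, fails; the proposition still holds here, but separation must occur along some $e_j$ with $j\ge 2$, and nothing in your argument selects it. A related issue: ``run the previous estimate inside the two opposite flush side-$\tfrac43$ cubes'' cannot give separation either, because both flush cubes contain $Q$ and your chaining only asserts existence of an oscillating sub-cube \emph{somewhere} inside each, with no location control --- the two outputs may coincide.

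The paper proceeds differently and the key extra ingredient is exactly a location constraint on the single oscillating cube. First a direct computation (Lemma~\ref{ekalemma}) shows that for corner-nested cubes $Q'\subset Q\subset\tilde Q$ the annulus $\tilde Q\setminus Q'$ has normalized oscillation $\ge c_1(n,p)A$. Then (Lemma~\ref{tokalemma}) this is transferred to a genuine cube $\bar Q$ of side $\tfrac23 l(Q)$ or $\tfrac43 l(Q)$ with $\bar Q\cap Q'=\varnothing$, hence $\bar Q\cap\tfrac13 Q=\varnothing$. Running this at all $2^n$ corners yields $2^n$ oscillating cubes, each forced to lie entirely on one side of $\tfrac13 Q$ in at least one coordinate, with the admissible sides dictated by the corner. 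A short pigeonhole argument over the $2^n$ corners then guarantees two of these cubes lie on opposite sides of $\tfrac13 Q$ in some coordinate, giving $\dist\ge\tfrac13 l(Q)$. Your chaining argument proves existence but not the disjointness $\bar Q\cap\tfrac13 Q=\varnothing$; without that, the corner-symmetry combinatorics has nothing to grip.
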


To prove this proposition we first need to prove Lemmas \ref{ekalemma} and \ref{tokalemma}.

\begin{lemma}
\label{ekalemma}
Let $0 < \alpha < 1 < \beta$.
Let $Q' \subset Q \subset \tilde{Q} \subset \mathbb{R}^n$ be cubes such that $l(Q') = \alpha l(Q)$ and $l(\tilde{Q}) = \beta l(Q)$. Suppose that $f \in L^1(\tilde{Q})$ is a nonnegative function, $1 < p < \infty$, $0 < A < \infty$ and $0 < \epsilon \leq \epsilon_0(n , p , \alpha , \beta)$. Assume also that \eqref{firstinequality} holds for $Q$ and \eqref{secondinequality} holds for $Q'$ and $\tilde{Q}$.
Then we have
\begin{equation*}
|\tilde{Q} \setminus Q'|^{\frac{1}{p}} \fint_{\tilde{Q} \setminus Q'} \left| f - f_{\tilde{Q} \setminus Q'} \right|
\geq c_1 \cdot A,
\end{equation*}
where $c_1 = c_1(n , p , \alpha , \beta)$ is a positive constant.

\end{lemma}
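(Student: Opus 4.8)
The plan is to exploit a forced gap between the size of $f$ on the inner annulus $Q\setminus Q'$ and on the outer annulus $\tilde Q\setminus Q$. Write $E:=\tilde Q\setminus Q'$, and note that up to a null set $E$ is the disjoint union of $E_1:=Q\setminus Q'$ and $E_2:=\tilde Q\setminus Q$, with $|E_1|=(1-\alpha^n)|Q|$, $|E_2|=(\beta^n-1)|Q|$ and $|E|=(\beta^n-\alpha^n)|Q|$. Set $M:=A|Q|^{-1/p}$; since $|Q'|^{1-1/p}=\alpha^{n-n/p}|Q|^{1-1/p}$ and $|\tilde Q|^{1-1/p}=\beta^{n-n/p}|Q|^{1-1/p}$, the hypotheses \eqref{firstinequality}--\eqref{secondinequality} read $\int_Q f\ge(1-\eps)M|Q|$, $\int_{Q'}f\le(1+\eps)\alpha^{n-n/p}M|Q|$ and $\int_{\tilde Q}f\le(1+\eps)\beta^{n-n/p}M|Q|$.

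First I would estimate the two annular averages. Subtracting integrals, $\int_{E_1}f=\int_Qf-\int_{Q'}f\ge M|Q|\big((1-\eps)-(1+\eps)\alpha^{n-n/p}\big)$ and $\int_{E_2}f=\int_{\tilde Q}f-\int_Qf\le M|Q|\big((1+\eps)\beta^{n-n/p}-(1-\eps)\big)$, so $\fint_{E_1}f\ge M\psi_1(\eps)$ and $\fint_{E_2}f\le M\psi_2(\eps)$ with $\psi_1(\eps)=\frac{(1-\eps)-(1+\eps)\alpha^{n-n/p}}{1-\alpha^n}$ and $\psi_2(\eps)=\frac{(1+\eps)\beta^{n-n/p}-(1-\eps)}{\beta^n-1}$. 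At $\eps=0$ these become $\psi_1(0)=\frac{1-\alpha^{n-n/p}}{1-\alpha^n}$ and $\psi_2(0)=\frac{\beta^{n-n/p}-1}{\beta^n-1}$, which are exactly the chord slopes of the function $t\mapsto t^{1-1/p}$ between $t=1$ and $t=\alpha^n<1$, respectively $t=\beta^n>1$. Since $1-1/p\in(0,1)$ this function is strictly concave, so both chord slopes are compared with its tangent slope $1-1/p$ at $t=1$, giving $\psi_1(0)>1-1/p>\psi_2(0)$; in particular $d_0:=\psi_1(0)-\psi_2(0)>0$. As $\psi_1,\psi_2$ are affine in $\eps$, there is $\eps_0=\eps_0(n,p,\alpha,\beta)>0$ with $\psi_1(\eps)-\psi_2(\eps)\ge d_0/2$ for all $0<\eps\le\eps_0$, hence $\fint_{E_1}f-\fint_{E_2}f\ge\tfrac12 Md_0$.

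It then remains to turn this average gap into oscillation on $E$. With $m:=f_E=\frac{|E_1|f_{E_1}+|E_2|f_{E_2}}{|E|}$, the triangle inequality on each piece gives $\int_{E_i}|f-m|\ge|E_i|\,|f_{E_i}-m|$, while $f_{E_1}-m=\frac{|E_2|}{|E|}(f_{E_1}-f_{E_2})$ and $m-f_{E_2}=\frac{|E_1|}{|E|}(f_{E_1}-f_{E_2})$; adding these, $\fint_E|f-f_E|\ge\frac{2|E_1||E_2|}{|E|^2}(f_{E_1}-f_{E_2})\ge\frac{2(1-\alpha^n)(\beta^n-1)}{(\beta^n-\alpha^n)^2}\cdot\frac{Md_0}{2}$. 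Multiplying by $|E|^{1/p}=(\beta^n-\alpha^n)^{1/p}|Q|^{1/p}$ and using $M|Q|^{1/p}=A$ yields the statement with $c_1=\dfrac{(1-\alpha^n)(\beta^n-1)\,d_0}{(\beta^n-\alpha^n)^{2-1/p}}$, where $d_0=\frac{1-\alpha^{n-n/p}}{1-\alpha^n}-\frac{\beta^{n-n/p}-1}{\beta^n-1}$. The one step I expect to carry the real content is the strict inequality $\psi_1(0)>\psi_2(0)$ — i.e., that the inner annulus is forced to carry a strictly larger average than the outer one — which rests on the strict concavity of $t\mapsto t^{1-1/p}$; the remaining points to verify carefully are merely that $f\in L^1(\tilde Q)$ makes every integral above finite and that the affine $\eps$-dependence indeed lets $\eps_0$ depend only on $n,p,\alpha,\beta$.
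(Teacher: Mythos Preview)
Your proof is correct and follows essentially the same route as the paper's: both split $\tilde Q\setminus Q'$ into the two annuli $Q\setminus Q'$ and $\tilde Q\setminus Q$, use the hypotheses to force a gap between the contributions on these pieces, and verify positivity of the resulting constant via a strict-concavity argument. The only cosmetic difference is that the paper collapses your two steps (average gap, then conversion to oscillation) into a single algebraic manipulation and checks positivity by showing $h(x)=1-\frac{1-\alpha^n}{\beta^n-\alpha^n}\beta^{n-nx}-\frac{\beta^n-1}{\beta^n-\alpha^n}\alpha^{n-nx}$ is concave with $h(0)=h(1)=0$, whereas you use the equivalent fact that the chord slopes of $t\mapsto t^{1-1/p}$ decrease; the constants obtained are in fact identical.
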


\begin{proof}

From the assumptions of the lemma we get directly
\begin{align*}
\int_{\tilde{Q} \setminus Q'} \left| f - f_{\tilde{Q} \setminus Q'} \right|
%&= \int_{\tilde{Q} \setminus Q} \left| f - f_{\tilde{Q} \setminus Q'} \right| + \int_{Q \setminus Q'} \left| f - f_{\tilde{Q} \setminus Q'} \right|
%\\
&\geq \left| \int_{\tilde{Q} \setminus Q} f - \frac{|\tilde{Q} \setminus Q|}{|\tilde{Q} \setminus Q'|} \int_{\tilde{Q} \setminus Q'} f \right|
+ \left| \int_{Q \setminus Q'} f - \frac{|Q \setminus Q'|}{|\tilde{Q} \setminus Q'|} \int_{\tilde{Q} \setminus Q'} f \right|
\\
&= 2 \left| \int_Q f - \frac{1 - \alpha^n}{\beta^n - \alpha^n} \int_{\tilde{Q}} f - \frac{\beta^n - 1}{\beta^n - \alpha^n} \int_{Q'} f \right|
\\
\geq 2 \bigg( A (1 - \epsilon) |Q|^{1 - \frac{1}{p}} &- \frac{1 - \alpha^n}{\beta^n - \alpha^n} A (1 + \epsilon) |\tilde{Q}|^{1 - \frac{1}{p}} - \frac{\beta^n - 1}{\beta^n - \alpha^n} A (1 + \epsilon) |Q'|^{1 - \frac{1}{p}} \bigg)
\\
= 2 A |Q|^{1 - \frac{1}{p}} \bigg( 1 - \epsilon &- \frac{1 - \alpha^n}{\beta^n - \alpha^n} (1 + \epsilon) \left( \beta^n \right)^{1 - \frac{1}{p}} - \frac{\beta^n - 1}{\beta^n - \alpha^n} (1 + \epsilon) \left( \alpha^n \right)^{1 - \frac{1}{p}} \bigg).
\end{align*}
Thus we have
\begin{align*}
&| \tilde{Q} \setminus Q' |^{\frac{1}{p}} \fint_{\tilde{Q} \setminus Q'} \left|f - f_{\tilde{Q} \setminus Q'} \right|
%\\
%\geq\,& \left( \left( \beta^n - \alpha^n \right) |Q| \right)^{\frac{1}{p} - 1} 2 A |Q|^{1 - \frac{1}{p}} \cdot
%\bigg( 1 - \epsilon - %&
%\frac{1 - \alpha^n}{\beta^n - \alpha^n} (1 + \epsilon) \left( \beta^n \right)^{1 - \frac{1}{p}}
%- \frac{\beta^n - 1}{\beta^n - \alpha^n} (1 + \epsilon) \left( \alpha^n \right)^{1 - \frac{1}{p}} \bigg)
\\
&\quad\geq\, 2 \left( \beta^n - \alpha^n \right)^{\frac{1}{p} - 1} \bigg( 1 - \epsilon - \frac{1 - \alpha^n}{\beta^n - \alpha^n} (1 + \epsilon) \beta^{n - \frac{n}{p}}
- \frac{\beta^n - 1}{\beta^n - \alpha^n} (1 + \epsilon) \alpha^{n - \frac{n}{p}} \bigg) A
\\
&\quad =\, C(n , p , \alpha , \beta , \epsilon) A.
\end{align*}
Notice that
\begin{align*}
\lim_{\epsilon \to 0} C(n , p , \alpha , \beta , \epsilon)
&= C(n , p , \alpha , \beta , 0)
\\
&= 2 \left( \beta^n - \alpha^n \right)^{\frac{1}{p} - 1} \left( 1 - \frac{1 - \alpha^n}{\beta^n - \alpha^n} \beta^{n - \frac{n}{p}} - \frac{\beta^n - 1}{\beta^n - \alpha^n} \alpha^{n - \frac{n}{p}} \right).
\end{align*}
This is positive for every $n$, $p$, $\alpha $ and $\beta$. Indeed we have $C(n , p , \alpha , \beta , 0 ) = 2 (\beta^n - \alpha^n)^{\frac{1}{p} - 1} h(\frac{1}{p})$ with
\begin{equation*}
h(x) = 1 - \frac{1 - \alpha^n}{\beta^n - \alpha^n} \beta^{n - n x} - \frac{\beta^n - 1}{\beta^n - \alpha^n} \alpha^{n - n x}.
\end{equation*}
We notice that $h(0) = h(1) = 0$ and the second derivative of $h$ is strictly negative. Thus $h$ is concave and $h(x) > 0$ for every $0 < x < 1$.
In conclusion, if $\epsilon$ is small enough, we have
\begin{equation*}
C(n , p , \alpha , \beta , \epsilon)
\geq \frac{1}{2} C(n , p , \alpha , \beta , 0)
:= c_1 (n , p , \alpha , \beta)
> 0.
\end{equation*}
This completes the proof.
\end{proof}

For the reader's convenience, we start by giving a proof of Proposition~\ref{paatulos} in the special case $n=1$ as it is technically much simpler. The idea of the proof is the same also in the multidimensional case.

\begin{proof}[Proof of Proposition \ref{paatulos} in the case $n = 1$]

Let us assume that $Q = [a , a + L]$. Define $\tilde{Q} = \left[ a , a + \frac{4}{3} L \right]$ and $Q' = \left[ a , a + \frac{2}{3} L \right]$. We set $Q_1 := \tilde{Q} \setminus Q' = \left[ a + \frac{2}{3} L , a + \frac{4}{3} L \right]$ and we get from Lemma \ref{ekalemma} and the assumptions in Proposition \ref{paatulos} that
\begin{equation*}
|Q_1|^{\frac{1}{p}} \fint_{Q_1} |f - f_{Q_1}|
\geq c_1 A,
\end{equation*}
if $\epsilon$ is small enough. Here $c_1 = c_1(n,p,\alpha,\beta)$ with $n=1$, $\alpha = \frac{2}{3}$ and $\beta = \frac{4}{3}$.

On the other hand if we set $\tilde{Q} = \left[  a - \frac{1}{3} L , a + L \right] $ and $Q' = \big[ a + \frac{1}{3} L , a + L \big] $ and define $Q_2 := \tilde{Q} \setminus Q' = \left[ a - \frac{1}{3} L , a + \frac{1}{3} L \right]$, then we get from Lemma \ref{ekalemma} that
\begin{equation*}
|Q_2|^{\frac{1}{p}} \fint_{Q_2} |f - f_{Q_2}|
\geq c_1 A,
\end{equation*}
if $\epsilon$ is small enough.
Finally we notice that the distance between the cubes $Q_1$ and $Q_2$ is  $\frac{1}{3} L$. This completes the proof.

\end{proof}

The case $n \geq 2$ is more complicated as the set $\tilde{Q} \setminus Q'$ is usually not a cube. Before the actual proof, we fix some notation about directions and projections.

\begin{definition}\label{def:direction}
Let $\{v_1,\ldots,v_n\}$ denote the standard orthonormal basis for $\mathbb R^n$. Let $Q_1 \subset \mathbb{R}^n$ and $Q_2 \subset \mathbb{R}^n$ be cubes. The cubes can be presented as Cartesian products of intervals as
\begin{align*}
Q_1 &= I_1^1 \times I_2^1 \times ... \times I_n^1
\text{  and  }
\\
Q_2 &= I_1^2 \times I_2^2 \times ... \times I_n^2.
\end{align*}
We say that $P_k(Q_1) := I_k^1$ is the projection of cube $Q_1$ to the subspace spanned by the base vector $v_k$. Fix an index $k$ with $1\le k\le n$. If for every $x_k \in P_k(Q_1)$ and $y_k \in P_k(Q_2)$ we have $x_k \leq y_k$, then we say that $Q_2$ is located in direction $\uparrow^k$ from $Q_1$ and $Q_1$ is located in direction $\downarrow^k$ from $Q_2$.
\end{definition}

\begin{lemma}
\label{tokalemma}
Let $Q \subset \mathbb{R}^n$ be a cube. Let $Q' \subset Q \subset \tilde{Q}$ be cubes with $l(Q') = \frac{2}{3} l(Q)$ and $l(\tilde{Q}) = \frac{4}{3} l(Q)$ such that all the cubes share a corner.
%In other words if
%\begin{equation*}
%Q = \prod_{i=1}^n \left[ a_i , a_i + L \right] ,
%\end{equation*}
%\begin{equation*}
%Q' = \prod_{i=1}^n \left[ b_i , b_i + \frac{2}{3} L \right]
%\end{equation*}
%and
%\begin{equation*}
%\tilde{Q} = \prod_{i=1}^n \left[ c_i , c_i + \frac{4}{3} L \right] ,
%\end{equation*}
%then for each $i$ we have either $a_i = b_i = c_i$ or $a_i + L = b_i + \frac{2}{3} L = c_i + \frac{4}{3} L$.
By symmetry we may assume that $Q = \left[ 0 , L \right]^n$, $Q' = \left[ 0 , \frac{2}{3} L \right]^n$ and $\tilde{Q} = \left[ 0 , \frac{4}{3} L \right]^n$.
Let $f \in L^1 \left( [0 , 2L]^n \right)$ be a nonnegative function, $1 < p < \infty$, $0 < A < \infty$ and $0 < \epsilon \leq \epsilon_0$ from Lemma \ref{ekalemma} with $\alpha = \frac{2}{3}$ and $\beta = \frac{4}{3}$.
Suppose also that \eqref{firstinequality} holds for $Q$ and \eqref{secondinequality} holds for $Q'$ and $\tilde{Q}$.
Then there exists a cube $\bar{Q} \subset [0, 2L]^n \setminus Q'$ such that either
\begin{itemize}
\item[a)]
$l(\bar{Q}) = \frac{2}{3} L$ and $\bar{Q} \subset \tilde{Q}$
\\
or
\item[b)]
$l(\bar{Q}) = \frac{4}{3} L$ and $P_k(\bar{Q}) = P_k(\tilde{Q}) = [0 , \frac{4}{3} L]$ for every $1 \leq k \leq n$ except one
\end{itemize}
and in addition
\begin{equation*}
|\bar{Q}|^{\frac{1}{p}} \fint_{\bar{Q}} |f - f_{\bar{Q}}|
\geq c_2 \cdot A,
\end{equation*}
where $c_2 = c_2(n,p)$ is a positive constant.
\end{lemma}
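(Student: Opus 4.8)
The plan is to reduce Lemma~\ref{tokalemma} to a finite case analysis in which Lemma~\ref{ekalemma} can be applied directly to a \emph{cube}, rather than to the annulus $\tilde Q\setminus Q'$ (which in dimension $n\ge 2$ is an L-shaped region). The quantitative input we already have is that the average of $f$ over $Q$ is at least $A(1-\epsilon)|Q|^{-1/p}$, while the averages over $Q'$ and over $\tilde Q$ (and any other cube of side $\tfrac23 L$ or $\tfrac43 L$) are at most $A(1+\epsilon)$ times the corresponding normalizing factor. The key algebraic observation is that $Q$ has strictly larger normalized average than the comparison cubes, so the mass of $f$ must be ``concentrated'' in $Q$ in a way that is incompatible with small oscillation over every nearby cube of the allowed sizes. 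Lemma~\ref{ekalemma} already packages exactly this tension, but only for the pair $Q'\subset Q\subset\tilde Q$ sharing a corner; the job of Lemma~\ref{tokalemma} is to extract from it an honest \emph{cube} $\bar Q$ on which the oscillation is large.

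The first step is to record the averages we control: from \eqref{firstinequality} and \eqref{secondinequality} with the normalizations $|Q|=L^n$, $|Q'|=(\tfrac23 L)^n$, $|\tilde Q|=(\tfrac43 L)^n$ we get lower and upper bounds on $\int_Q f$, $\int_{Q'}f$ and $\int_{\tilde Q}f$ in terms of $AL^{n-1/p}$. Combining these (as in the computation in Lemma~\ref{ekalemma}) shows that, for $\epsilon$ small, $\int_{\tilde Q\setminus Q}f$ and $\int_{Q\setminus Q'}f$ differ, up to the ratio of their measures, by at least a fixed multiple of $AL^{n-1/p}$ — i.e.\ $f$ is genuinely unbalanced between the inner shell $Q\setminus Q'$ and the outer shell $\tilde Q\setminus Q$. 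Next I would decompose the L-shaped region $\tilde Q\setminus Q'$ into a bounded number (depending only on $n$) of cubes: the natural dyadic-type decomposition uses cubes of side $\tfrac13 L$ and $\tfrac23 L$ tiling $[0,\tfrac43 L]^n\setminus[0,\tfrac23 L]^n$, together with their neighbors. Since the averages over $Q$, $Q'$, $\tilde Q$ are pinned and the total number of pieces is $O_n(1)$, a pigeonhole argument produces one piece, or one controlled union of adjacent pieces, over which $f$ has average comparably larger (or smaller) than over an adjacent piece of the same shape.

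The third step is to promote that ``two adjacent pieces with different averages'' statement into the desired cube $\bar Q$ with large normalized oscillation $|\bar Q|^{1/p}\fint_{\bar Q}|f-f_{\bar Q}|\ge c_2 A$. Here the two alternatives a) and b) in the statement correspond to whether the witnessing cube can be taken of side $\tfrac23 L$ sitting inside $\tilde Q$, or whether one is forced to use a cube of side $\tfrac43 L$ that agrees with $\tilde Q$ in all but one coordinate direction (this happens when the imbalance is ``spread along one axis'' and no single small cube sees it). In both cases, once $\bar Q$ is chosen so that it contains a sub-cube with large average and a disjoint sub-cube with small average — with the gap controlled by a fixed fraction of $AL^{n-1/p}$, exactly as produced in step two — the elementary inequality $\int_{\bar Q}|f-f_{\bar Q}|\ge 2\min(|E_1|,|E_2|)\cdot(\text{average gap})$ applied to the two sub-cubes $E_1,E_2$ gives the bound, since $|E_i|$ is a fixed fraction of $|\bar Q|$ and $|\bar Q|^{1/p}$ absorbs the remaining power of $L$.

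The main obstacle I expect is the pigeonhole/geometry bookkeeping in step two: one must be careful that the ``large-average'' and ``small-average'' pieces extracted from $\tilde Q\setminus Q'$ actually lie in a \emph{common} cube $\bar Q$ of one of the two permitted shapes, and are separated enough for the oscillation estimate. The asymmetry of the L-shaped region means the pieces touching the corner near $Q'$ behave differently from those on the outer faces of $\tilde Q$, and the case $n=1$ (already handled) hides this entirely because there the region $\tilde Q\setminus Q'$ is itself an interval. I would handle this by choosing the decomposition so that every piece of side $\tfrac13 L$ is contained in some cube of side $\tfrac23 L$ lying in $\tilde Q$ (giving alternative a)), unless the imbalance survives only at the scale of a full $\tfrac43 L$-slab, in which case alternative b) is the only option — and then verifying that the constants produced depend only on $n$ and $p$ via Lemma~\ref{ekalemma} and the fixed combinatorial count. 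The ratios $\tfrac23,\tfrac43$ are chosen precisely so that $\tfrac43-\tfrac23=\tfrac23$ and $\tfrac43=2\cdot\tfrac23$, making such a clean cube decomposition possible.
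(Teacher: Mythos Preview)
Your plan is viable but follows a different route from the paper's. The paper argues by contradiction over a fixed finite list of candidate cubes: the $2^n-1$ dyadic children $Q_0,\dots,Q_{2^n-2}$ of $\tilde Q$ other than $Q'$ (these are the type~a) candidates) together with the $n$ cubes $Q_j'$ of side $\tfrac43 L$ obtained by translating $\tilde Q$ by $\tfrac23 L$ along each coordinate axis (type~b)). The structural point is that the far-corner child $Q_0=[\tfrac23 L,\tfrac43 L]^n$ lies in \emph{every} $Q_j'$ and each $Q_i$ lies in \emph{some} $Q_j'$, so every difference $\big|\int_{Q_i}f-\int_{Q_k}f\big|$ can be routed through $Q_0$ and bounded by oscillations over suitable $Q_j'$; assuming all candidates have small oscillation then forces the oscillation over $\tilde Q\setminus Q'$ to be small, contradicting Lemma~\ref{ekalemma}. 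Your approach instead works at the finer $\tfrac13 L$ scale and tries to locate adjacent pieces with an average gap. Two points of contrast. First, your account of when alternative~b) is needed is off: in the paper the $\tfrac43 L$-cubes are the hubs that detect average differences between the dyadic $\tfrac23 L$-children (each of which may itself have zero internal oscillation), whereas at your $\tfrac13 L$ scale one checks that any two adjacent pieces of $\tilde Q\setminus Q'$ already lie in a common $\tfrac23 L$-cube contained in $\tilde Q\setminus Q'$ --- so your method, carried through, would in fact never require type~b). Second, the passage from ``the two shells have different normalized averages'' to ``two adjacent $\tfrac13 L$-pieces have a gap $\gtrsim A|Q|^{-1/p}$'' is the substantive step you leave unwritten: Lemma~\ref{ekalemma} only gives large oscillation over the whole annulus, and you must make the chaining explicit (connect a high-average piece to a low-average piece through $O_n(1)$ adjacent steps and extract a consecutive pair with gap $\gtrsim A|Q|^{-1/p}/N$), or equivalently run a contradiction argument over your own finite candidate list of $\tfrac23 L$-cubes.
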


\begin{proof}

Let us divide the cube $\tilde{Q}$ dyadically into $2^n$ subcubes. Then one of them is $Q'$. Let us define $Q_0 = \left[ \frac{2}{3} L , \frac{4}{3} L \right]^n$ and let us denote the rest of the subcubes by $(Q_i)_{i=1}^{2^n - 2}$. Notice that $Q'$ does not have an index unlike all the other dyadic subcubes.
For any $1 \leq j \leq n$ we define
\begin{equation*}
Q_j' := I_1 \times I_2 \times ... \times I_n,
\end{equation*}
where
\[
I_k=
\begin{cases}
[\frac{2}{3} L , 2L],& k=j,\\
[0 , \frac{4}{3} L], & k\neq j.\\
\end{cases}
\]
See Figure \ref{tokanlemmankuva} to see how these cubes are located with respect to each other.
It is simple to check that then 
\[
Q_0 \subset Q_j' \subset [0 , 2L]^n \setminus Q' \quad\textrm{ and }\quad l(Q_j') = 2 l(Q_0)
\]
for every $j$. Also for every $Q_i$ with $1\le i\le 2^n-2$, there exists at least one cube $Q_j'$ such that $Q_i \subset Q_j'$.
For every $Q_i$ let us denote by $Q_{j_i}'$ one of these cubes $Q_j'$. 
%There might be multiple such cubes, but it is enough that we can find one.

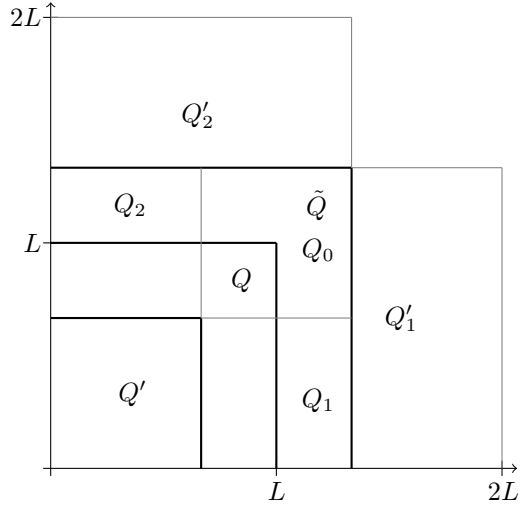
\begin{figure}[ht]
\centering
\begin{tikzpicture}

\draw[black,->] (-0.1,0) -- (6.2,0);
\draw[black,->] (0,-0.1) -- (0,6.2) ;

\draw (-0.1,3)--(0.1,3);
\draw (0,3) node[left] {$L$};
\draw (-0.1,6)--(0.1,6);
\draw (0,6) node[left] {$2L$};
\draw (3,-0.1)--(3,0.1);
\draw (3.25,-0.3) node[left] {$L$};
\draw (6,-0.1)--(6,0.1);
\draw (6.35,-0.3) node[left] {$2L$};

% Q
\draw[black, thick] (0,3)--(3,3);
\draw[black, thick] (3,0)--(3,3);
\draw (2.8,2.5) node[left] {$Q$};

% Q'
\draw[black, thick] (0,2)--(2,2);
\draw[black, thick] (2,0)--(2,2);
\draw (1.4,1) node[left] {$Q'$};

% \tilde{Q}
\draw[black, thick] (0,4)--(4,4);
\draw[black, thick] (4,0)--(4,4);
\draw (3.8,3.5) node[left] {$\tilde{Q}$};

% Q_0, Q_1, Q_2
\draw[gray] (2,2)--(2,4);
\draw[gray] (2,2)--(4,2);
\draw (3.9,2.9) node[left] {$Q_0$};
\draw (3.9,0.9) node[left] {$Q_1$};
\draw (1.4,3.5) node[left] {$Q_2$};

% Q_1', Q_2'
\draw[gray] (4,4)--(4,6);
\draw[gray] (4,4)--(6,4);
\draw[gray] (0,6)--(4,6);
\draw[gray] (6,0)--(6,4);
\draw (5,2) node[left] {$Q_1'$};
\draw (2.3,4.7) node[left] {$Q_2'$};

\end{tikzpicture}

\caption{The cubes $Q$, $Q'$, $\tilde{Q}$, $(Q_i)_{i=0}^{2^n-2}$ and $(Q_j')_{j=1}^n$, when $n = 2$. We have $\tilde{Q} = Q' \cup Q_0 \cup Q_1 \cup Q_2$, $Q_0 \cup Q_1 \subset Q_1'$ and $Q_0 \cup Q_2 \subset Q_2'$.}

\label{tokanlemmankuva}
\end{figure}

Let us prove that for at least one of the cubes $Q_i$ or $Q_j'$ we have
\begin{equation}
\label{vastaoletus}
|\bar{Q}|^{\frac{1}{p}} \fint_{\bar{Q}} |f - f_{\bar{Q}}|
\geq \left( 2^n - 1 \right)^{-\frac{1}{p}} \left( 1 + 2^{1 + n - \frac{n}{p}} \left( \frac{2^n - 2}{2^n - 1} \right)^2 \right)^{-1} c_1 A,
\end{equation}
where $c_1 = c_1(n , p , \alpha , \beta)$ is the constant from Lemma \ref{ekalemma} with $\alpha = \frac{2}{3}$ and $\beta = \frac{4}{3}$.
We prove this by contradiction. Assume that \eqref{vastaoletus} does not hold for any  $Q_i$ or $Q_j'$. We get from Lemma \ref{ekalemma} that
\begin{align}
\nonumber
c_1 A
&\leq |\tilde{Q} \setminus Q'|^{\frac{1}{p}} \fint_{\tilde{Q} \setminus Q'} \left| f - f_{\tilde{Q} \setminus Q'} \right|
\\
&= \left( (2^n - 1) |Q'| \right)^{\frac{1}{p} - 1} \cdot \sum_{i=0}^{2^n - 2} \int_{Q_i} \left| f - \frac{1}{(2^n - 1)|Q'|} \sum_{k=0}^{2^n-2} \int_{Q_k} f \right|.
\label{sumtobeestimated}
\end{align}
We continue estimating one of the integrals in the sum above
\begin{align*}
&\int_{Q_i} \Bigg| f - \frac{1}{(2^n - 1)|Q'|} \sum_{k=0}^{2^n-2} \int_{Q_k} f \Bigg|
\\
= &\int_{Q_i} \Bigg| f - f_{Q_i} + \frac{1}{(2^n - 1)|Q'|} \sum_{k=0}^{2^n - 2} \left( \int_{Q_i} f - \int_{Q_k} f \right) \Bigg|
\\
\leq &\int_{Q_i} |f - f_{Q_i}| + \frac{1}{2^n - 1} \sum_{k=0}^{2^n - 2} \left| \int_{Q_i} f -  \int_{Q_k} f \right|.
\end{align*}
Assume that $k \geq 1$. Then because $Q_k \cup Q_0 \subset Q_{j_k}'$ and $Q_k \cap Q_0 = \varnothing$, we have
\begin{equation*}
\left| \int_{Q_k} f -  \int_{Q_0} f \right|
%&= \left| \int_{Q_k} f - |Q_k| f_{Q_{j_k}'} + |Q_0| f_{Q_{j_k}'} - \int_{Q_0} f \right|
\leq \left| \int_{Q_k} \left( f - f_{Q_{j_k}'} \right) \right| + \left| \int_{Q_0} \left( f_{Q_{j_k}'} - f \right) \right|
\leq \int_{Q_{j_k}'} \left| f - f_{Q_{j_k}'} \right|.
\end{equation*}
Thus if $i = 0$, we get
\begin{equation*}
\sum_{k=0}^{2^n - 2} \left| \int_{Q_i} f -  \int_{Q_k} f \right|
\leq \sum_{k=1}^{2^n - 2} \int_{Q_{j_k}'} \left| f - f_{Q_{j_k}'} \right|.
\end{equation*}
On the other hand if $i \geq 1$, then
\begin{align*}
\sum_{\substack{k=0 \\ k \neq i }}^{2^n - 2} \left| \int_{Q_i} f -  \int_{Q_k} f \right|
&\leq (2^n - 2) \left| \int_{Q_i} f - \int_{Q_0} f \right| + \sum_{\substack{k=0 \\ k \neq i}}^{2^n - 2} \left| \int_{Q_0} f - \int_{Q_k} f \right|
\\
&\leq (2^n - 2) \int_{Q_{j_i}'} \left| f - f_{Q_{j_i}'} \right| + \sum_{\substack{k=1 \\ k \neq i}}^{2^n - 2} \int_{Q_{j_k}'} \left| f - f_{Q_{j_k}'} \right|.
\end{align*}
We continue by estimating the sum in \eqref{sumtobeestimated} and we get
\begin{align*}
&\sum_{i=0}^{2^n - 2} \int_{Q_i} \left| f - \frac{1}{(2^n - 1)|Q'|} \sum_{k=0}^{2^n-2} \int_{Q_k} f \right|
\\
\leq &\sum_{i=0}^{2^n - 2} \left( \int_{Q_i} |f - f_{Q_i}| + \frac{1}{2^n - 1} \sum_{k=0}^{2^n - 2} \left| \int_{Q_i} f -  \int_{Q_k} f \right| \right)
\\
\leq &\sum_{i=0}^{2^n - 2} \int_{Q_i} |f - f_{Q_i}| + \frac{1}{2^n - 1} \sum_{k=1}^{2^n - 2} \int_{Q_{j_k}'} \left| f - f_{Q_{j_k}'} \right|
\\
&\quad\quad+ \sum_{i=1}^{2^n - 2} \left( \frac{1}{2^n - 1} \left( (2^n - 2) \int_{Q_{j_i}'} \left| f - f_{Q_{j_i}'} \right| + \sum_{\substack{k=1 \\ k \neq i}}^{2^n - 2} \int_{Q_{j_k}'} \left| f - f_{Q_{j_k}'} \right| \right) \right)
\\
= &\sum_{i=0}^{2^n - 2} \int_{Q_i} |f - f_{Q_i}| + \sum_{k=1}^{2^n - 2} \int_{Q_{j_k}'} \left| f - f_{Q_{j_k}'} \right| + \frac{1}{2^n - 1} \sum_{i=1}^{2^n - 2} \sum_{\substack{k=1 \\ k \neq i}}^{2^n - 2} \int_{Q_{j_k}'} \left| f - f_{Q_{j_k}'} \right|
\\
= &\sum_{i=0}^{2^n - 2} \int_{Q_i} |f - f_{Q_i}| + \left( 1 + \frac{2^n - 3}{2^n - 1} \right) \sum_{k=1}^{2^n - 2} \int_{Q_{j_k}'} \left| f - f_{Q_{j_k}'} \right| .
\end{align*}
Then finally from our assumption, that \eqref{vastaoletus} does not hold, we get
\begin{align*}
&\sum_{i=0}^{2^n - 2} \int_{Q_i} |f - f_{Q_i}| + \left( 1 + \frac{2^n - 3}{2^n - 1} \right) \sum_{k=1}^{2^n - 2} \int_{Q_{j_k}'} \left| f - f_{Q_{j_k}'} \right|
\\
<& \sum_{i=0}^{2^n - 2} \left( 2^n - 1 \right)^{-\frac{1}{p}} \left( 1 + 2^{1 + n - \frac{n}{p}} \left( \frac{2^n - 2}{2^n - 1} \right)^2 \right)^{-1} c_1 A |Q_i|^{1 - \frac{1}{p}}
\\
&\quad+ 2 \cdot \frac{2^n - 2}{2^n - 1} \sum_{k=1}^{2^n - 2} \left( 2^n - 1 \right)^{-\frac{1}{p}} \left( 1 + 2^{1 + n - \frac{n}{p}} \left( \frac{2^n - 2}{2^n - 1} \right)^2 \right)^{-1} c_1 A |Q_{j_k}'|^{1 - \frac{1}{p}}
\\
=& \left( 2^n - 1 \right)^{1-\frac{1}{p}} \left( 1 + 2^{1 + n - \frac{n}{p}} \bigl( \tfrac{2^n - 2}{2^n - 1} \bigr)^2 \right)^{-1} c_1 A \biggl( |Q'|^{1 - \frac{1}{p}}
+  \tfrac{2(2^n - 2)^2}{(2^n - 1)^2} \left( 2^n |Q'| \right)^{1 - \frac{1}{p}} \biggr)
\\
=& (2^n - 1)^{1 - \frac{1}{p}} c_1 A |Q'|^{1 - \frac{1}{p}}.
\end{align*}
In conclusion we have
\begin{align*}
c_1 A
&\leq |\tilde{Q} \setminus Q'|^{\frac{1}{p}} \fint_{\tilde{Q} \setminus Q'} \left| f - f_{\tilde{Q} \setminus Q'} \right|
\\
&< \left( (2^n - 1) |Q'| \right)^{\frac{1}{p} - 1} \cdot (2^n - 1)^{1 - \frac{1}{p}} c_1 A |Q'|^{1 - \frac{1}{p}}
= c_1 A,
\end{align*}
which is a contradiction. Hence there is at least one cube $\bar{Q} \subset [0 , 2L]^n \setminus Q'$ that satisfies the conditions of the lemma and
\begin{equation*}
|\bar{Q}|^{\frac{1}{p}} \fint_{\bar{Q}} |f - f_{\bar{Q}}|
\geq \left( 2^n - 1 \right)^{-\frac{1}{p}} \left( 1 + 2^{1 + n - \frac{n}{p}} \left( \frac{2^n - 2}{2^n - 1} \right)^2 \right)^{-1} c_1 A
= c_2 A,
\end{equation*}
where $c_2 = c_2(n,p)$ is a positive constant. This completes the proof.
\end{proof}

Now we are ready to prove Proposition \ref{paatulos}.

\begin{proof}[Proof of Proposition \ref{paatulos} in the case $n \geq 2$]

Let $0 < \epsilon \leq \epsilon_0$ from Lemma \ref{ekalemma} with $\alpha = \frac{2}{3}$ and $\beta = \frac{4}{3}$. We apply Lemma \ref{tokalemma} $2^n$ times to each corner of $Q$ and obtain $2^n$ sets of cubes $Q'$, $\tilde{Q}$ and $\bar{Q}$. Here every $\bar{Q}$ satisfies \eqref{ekaehto} and \eqref{kolmasehto}.
No matter which corner the cubes $Q'$, $Q$ and $\tilde{Q}$ share, we always have $\frac{1}{3} Q \subset Q'$. Since $\bar{Q}$ and $Q'$ are disjoint, we get that $\bar{Q}$ and $\frac{1}{3} Q$ are also disjoint and thus each $\bar{Q}$ is located in at least one direction from $\frac{1}{3} Q$ in the sense of Definition \ref{def:direction}.

Because $Q'$ is in the corner of $\tilde{Q}$, there is one direction for each $k \in \{1 , 2 , ... , n\}$ such that $\bar{Q}$ cannot be located in that direction from $\frac{1}{3} Q$. For example if $Q = [0,L]^n$, $Q' = [0, \frac{2}{3} L]^n$ and $\tilde{Q} = [0, \frac{4}{3} L]^n$, then the possible directions where $\bar{Q}$ may be located in from $\frac{1}{3} Q$ are $\uparrow^1$, $\uparrow^2$, ... and $\uparrow^n$. The cube $\bar{Q}$ cannot be located in any of the directions $\downarrow^1$, $\downarrow^2$, ... and $\downarrow^n$ from $\frac{1}{3} Q$. Thus for each cube $\bar{Q}$ there are $n$ possible directions and for any two cubes $\bar{Q}$ the sets of possible directions do not coincide.

If one cube $\bar{Q}$ is located in direction $\uparrow^k$ from $\frac{1}{3} Q$ and another is located in direction $\downarrow^k$, then the distance between those two cubes is at least $\frac{1}{3} l(Q)$ -- thus the proposition is true. Therefore let us assume by contradiction that no two cubes $\bar{Q}$ are located in opposite directions.

Let $S$ be the set of all directions in which all the cubes $\bar{Q}$ are located from $\frac{1}{3} Q$. Clearly then we have $|S| \leq n$, because by our assumption there is at most one direction in $S$ for each $k \in \{1 , 2 , ... , n\}$. However there is always at least one cube $\bar{Q}$ for which the possible directions are all exactly opposite to the directions in $S$. If for example the directions in $S$ are $\uparrow^1$, $\uparrow^2$, ... and $\uparrow^m$, then there is no direction in $S$ for the cube $\bar{Q}$ for which the possible directions are only $\downarrow^1$, $\downarrow^2$, ... and $\downarrow^n$. This contradicts with the assumption that the directions of all $\bar{Q}$ are represented in $S$. Thus we conclude that there must be two cubes $\bar{Q}$ in opposite directions. This completes the proof.
\end{proof}

Now we can show that the Morrey type integral \eqref{eqn:morrey} vanishes as the measure of the cube tends to infinity.

\begin{theorem}
\label{vanishingbigcubes}
Let $1 < p < \infty$ and suppose that $f \in JN_p(\mathbb{R}^n)$. Then there is a constant $b$ such that
\begin{equation}
\label{vanishingbigcubesequation}
\lim_{a \to \infty} \sup_{\substack{Q \subset \mathbb{R}^n \\ l(Q) \geq a }} |Q|^{1/p} \fint_Q |f-b| = 0,
\end{equation}
where the supremum is taken over all cubes $Q \subset \mathbb{R}^n$ such that the side length of $Q$ is at least $a$.
\end{theorem}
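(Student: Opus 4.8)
The plan is a proof by contradiction, with Proposition~\ref{paatulos} as the engine.

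First I would isolate the constant $b$. Repeating the Cauchy‑sequence estimate from the proof of the preceding proposition, but now using only $\abs{f_{B_k}-f_{B_{k+1}}}\le 2\fint_{B_{k+1}}\abs{f-f_{B_{k+1}}}\le 2\abs{B_{k+1}}^{-1/p}\Norm{f}{JN_p(\R^n)}$ for the origin‑centred cubes $B_k$ with $\abs{B_k}=2^k$, one gets $f_{B_k}\to b$ with $\abs{f_{B_k}-b}\lesssim 2^{-k/p}\Norm{f}{JN_p(\R^n)}$; chaining the same estimate through the dyadic dilates $2^jQ$ of an arbitrary cube $Q$ (each a bounded multiple of the previous, the limit of their averages again being $b$) yields
\[
\abs{f_Q-b}\le c(n,p)\,\abs{Q}^{-1/p}\Norm{f}{JN_p(\R^n)}\qquad\text{for every cube }Q .
\]
Writing $g:=\abs{f-b}\ge 0$, this gives $\abs{Q}^{1/p}\fint_Q g\le\abs{Q}^{1/p}\fint_Q\abs{f-f_Q}+\abs{Q}^{1/p}\abs{f_Q-b}\le c'(n,p)\Norm{f}{JN_p(\R^n)}$, so the non‑increasing quantity $A_0:=\lim_{a\to\infty}\sup_{l(Q)\ge a}\abs{Q}^{1/p}\fint_Q g$ is finite; the theorem is exactly the assertion $A_0=0$, and I assume $A_0>0$.

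Next I set up the production mechanism. Fix $A\in(0,A_0)$ and $\eps\le\eps_0(n,p)$ with $A(1+\eps)>A_0$, and choose $a_0$ so large that $\abs{Q'}^{1/p}\fint_{Q'}g\le A(1+\eps)$ for every cube $Q'$ with $l(Q')\ge a_0$. Then Proposition~\ref{paatulos} (with $X=\R^n$, so $3Q\subseteq X$ automatically) applies to every cube $Q$ with $l(Q)\ge\tfrac32 a_0$ and $\abs{Q}^{1/p}\fint_Q g\ge A(1-\eps)$ — call such a cube \emph{admissible} — and produces disjoint cubes $Q_1,Q_2\subseteq 3Q$ with $\dist(Q_1,Q_2)\ge\tfrac13 l(Q)$, $l(Q_i)\in\{\tfrac23,\tfrac43\}l(Q)\ge a_0$, and $\abs{Q_i}^{1/p}\fint_{Q_i}\abs{g-g_{Q_i}}\ge cA$. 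Taking the competitor constant $\abs{f_{Q_i}-b}$ in the oscillation of $g$ gives $\fint_{Q_i}\abs{g-g_{Q_i}}\le 2\fint_{Q_i}\abs{f-f_{Q_i}}$, hence $\abs{Q_i}^{1/p}\fint_{Q_i}\abs{f-f_{Q_i}}\ge\tfrac{c}{2}A$. So each admissible cube gives two disjoint cubes, sitting inside its triple, with oscillation bounded below.

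To finish, it suffices to manufacture infinitely many \emph{pairwise disjoint} cubes $R_1,R_2,\dots$ with $\abs{R_m}^{1/p}\fint_{R_m}\abs{f-f_{R_m}}\ge\tfrac c2A$, since then $\sum_m\abs{R_m}\bigl(\fint_{R_m}\abs{f-f_{R_m}}\bigr)^p=\infty$ contradicts $f\in JN_p(\R^n)$. The point $\sup_{l(Q)\ge a}\abs{Q}^{1/p}\fint_Q g\ge A_0$ for every $a$ guarantees admissible cubes of arbitrarily large side, so I would feed Step~2 a sequence of admissible cubes $Q^{(m)}$ whose triples $3Q^{(m)}$ are pairwise disjoint; the whole difficulty is to secure this disjointness, and I would analyse where admissible cubes can be located. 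If admissible cubes can be chosen with $\dist(0,Q^{(m)})/l(Q^{(m)})\to\infty$, then a subsequence with $\dist(0,Q^{(m+1)})\ge 10\,\dist(0,Q^{(m)})$ confines each $3Q^{(m)}$ to a spherical shell about the origin of inner radius $\sim\dist(0,Q^{(m)})$ and thickness $\sim l(Q^{(m)})$; these shells, hence the triples, are disjoint, and the mechanism yields the $R_m$. The remaining (``clustering'') case — admissible cubes forced to stay within bounded distance of a single point relative to their scale — is where the real work lies: here one must look at $A_0^{(R)}:=\lim_{a\to\infty}\sup\{\abs{Q}^{1/p}\fint_Q g: l(Q)\ge a,\ \dist(0,Q)\ge R\}$, which is non‑increasing in $R$ and $<A_0$; if $\inf_R A_0^{(R)}=0$ then a Whitney‑type decomposition of a large cube into pieces far from a fixed ball $B(0,R_0)$ (on which $g$ carries only a fixed finite $L^1$‑mass) forces $\abs{Q}^{1/p}\fint_Q g\to 0$ as $l(Q)\to\infty$, i.e. $A_0=0$, a contradiction, while if $\inf_R A_0^{(R)}>0$ there are cubes with Morrey integral $\gtrsim\inf_R A_0^{(R)}$ arbitrarily far from the origin and one reruns the argument inside such far regions, where the bad behaviour near the cluster point is invisible, using that the proof of Proposition~\ref{paatulos} only ever invokes its hypothesis for cubes near the one being processed.

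The main obstacle I anticipate is precisely this clustering case. The functional $Q\mapsto\abs{Q}^{1/p}\fint_Q g$ does not localise — restricting to a sub‑cube can only decrease it, and under dyadic splitting it decreases by a definite factor — so one cannot simply ``excise'' the cluster and must instead replace the global hypothesis of Proposition~\ref{paatulos} by the local one that is actually used in its proof; getting the bookkeeping of the far regions and the localised Proposition right is the delicate part.
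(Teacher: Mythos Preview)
Your setup is correct and matches the paper's: reduce to a nonnegative function $g=|f-b|$ with a finite limiting Morrey value $A_0$, assume $A_0>0$, and use Proposition~\ref{paatulos} to produce cubes with oscillation bounded below. (The paper obtains the constant $b$ by quoting the embedding $JN_p(\R^n)\subset L^{p,\infty}(\R^n)/\R$ from \cite{takala}, while you rederive it directly; either is fine.)

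The gap is in the disjointness step, where you have missed the point of condition~\eqref{tokaehto}. You try to arrange that the \emph{triples} $3Q^{(m)}$ are pairwise disjoint, which forces you into the clustering dichotomy and the unfinished localisation argument. But this is unnecessary: the separation $\dist(Q_1,Q_2)\ge\tfrac13 l(Q)$ is there precisely so that you can always choose \emph{one of the two} output cubes to be disjoint from everything already selected. Concretely, suppose you have chosen pairwise disjoint cubes $R_1,\dots,R_m$; let $B$ be the smallest cube containing their union. Pick any admissible cube $Q$ with $\tfrac13 l(Q)\ge l(B)$ (possible since admissible cubes exist at every scale). Proposition~\ref{paatulos} gives $Q_1,Q_2\subset 3Q$ with $\dist(Q_1,Q_2)\ge\tfrac13 l(Q)\ge l(B)$; since $B$ has diameter at most $l(B)\sqrt n$... actually more simply, a set of diameter less than the distance between $Q_1$ and $Q_2$ can meet at most one of them, so at least one of $Q_1,Q_2$ is disjoint from $B$ and hence from all $R_j$. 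Take that one as $R_{m+1}$ and iterate. This is exactly the paper's construction, and it works regardless of where the admissible cubes sit --- no case analysis, no localised proposition, no Whitney decomposition.

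So the ``clustering case'' you flag as the main obstacle simply does not arise once you exploit \eqref{tokaehto} in this way.
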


\begin{proof}

Assume first that $f \in JN_p(\mathbb{R}^n) \cap L^{p,\infty}(\mathbb{R}^n)$ and $f$ is nonnegative.
Let
\begin{equation*}
\lim_{a \to \infty} \sup_{\substack{Q \subset \mathbb{R}^n \\ l(Q) \geq a }} |Q|^{1/p} \fint_Q f
= A,
\end{equation*}
where $A \geq 0$. The limit exists as this sequence is decreasing, the elements are finite -- this follows from Remark \ref{weaklpremark} because $f \in L^{p,\infty}(\mathbb{R}^n)$ -- and the sequence is bounded from below by 0.

Let us assume that $A > 0$.
Let $0 < \epsilon \leq \epsilon_0(n , p)$ as in Proposition \ref{paatulos}. Then there exists a number $N < \infty$ such that \eqref{secondinequality} holds for every cube $Q$ where $l(Q) \geq N$. Also for any number $M < \infty$ there exists a cube $Q$ such that $l(Q) \geq M$ and \eqref{firstinequality} holds.
This means that we can find a sequence of cubes $(Q_i)_{i=1}^{\infty}$ such that $l(Q_1) \geq \frac{3}{2} N$, $l(Q_{i+1}) > l(Q_i)$, $\lim_{i \to \infty} l(Q_i) = \infty$ and
\begin{equation*}
|Q_i|^{1/p} \fint_{Q_i} f
\geq A (1 - \epsilon)
\end{equation*}
for every $i \in \mathbb{N}$.

Let $Q_i$ be one of these cubes. According to Proposition \ref{paatulos} there exist two cubes $Q_{i,1} \subset 3 Q_i$ and $Q_{i,2} \subset 3 Q_i$ that satisfy \eqref{ekaehto}, \eqref{tokaehto} and \eqref{kolmasehto}.
The cubes $(Q_{i,1})_{i=1}^{\infty}$ may not be pairwise disjoint. However for every cube $Q_i$ we have two cubes to choose from.

Let us construct a new sequence of cubes $(Q_{{i_j}}')_{j=1}^{\infty}$ iteratively.
We start with $Q_{i_1} := Q_1$ and choose $Q_{i_1}' := Q_{1,1}$.
Let $Q_{i_2}$, $i_2 > 1$, be the smallest cube in the sequence $(Q_i)_{i=1}^{\infty}$ such that $\frac{1}{3} l(Q_{i_2}) \geq l(Q_{i_1}')$.
Then at least one of the cubes $Q_{i_2,1}$ and $Q_{i_2,2}$ is pairwise disjoint with $Q_{i_1}'$.
Let's say that $Q_{i_2,1}$ is the disjoint one and set $Q_{i_2}' := Q_{i_2,1}$.

Let us denote by $Q$ the smallest cube such that $Q_{i_1}' \cup Q_{i_2}' \subset Q$. Let $Q_{i_3}$, $i_3 > i_2$, be the smallest cube in the sequence $(Q_i)_{i=1}^{\infty}$ such that $\frac{1}{3} l(Q_{i_3}) \geq l(Q)$. Then at least one of the cubes $Q_{i_3 , 1}$ and $Q_{i_3 , 2}$ is pairwise disjoint with both $Q_{i_1}'$ and $Q_{i_2}'$.

By repeating this process and taking a subsequence of $(Q_i)_{i=1}^{\infty}$, if necessary, we get infinitely many pairwise disjoint cubes $(Q_{i_j}')_{j=1}^{\infty}$. Then we get
\begin{equation*}
\sum_{j=1}^{\infty} |Q_{i_j}'| \left( \fint_{Q_{i_j}'} \left| f - f_{Q_{i_j}'} \right| \right)^p
\geq \sum_{j=1}^{\infty} c(n,p) A^p
= \infty.
\end{equation*}
This contradicts with the assumption that $f \in JN_p(\mathbb{R}^n)$. Thus we conclude that $A = 0$.

Now assume only that $f \in JN_p(\mathbb{R}^n)$. From \cite[Theorem 5.2]{takala} we get that there is a constant $b$ such that $f - b \in L^{p,\infty}(\mathbb{R}^n)$. Obviously $f - b \in JN_p(\mathbb{R}^n)$. Then from \eqref{itseisarvoepayhtalo} we get that $|f-b| \in JN_p(\mathbb{R}^n) \cap L^{p,\infty}(\mathbb{R}^n)$.
Finally by the same reasoning as before, we conclude that
\begin{equation*}
\lim_{a \to \infty} \sup_{\substack{Q \subset \mathbb{R}^n \\ l(Q) \geq a }} |Q|^{1/p} \fint_Q |f-b|
= 0.
\end{equation*}
This completes the proof.
\end{proof}

\begin{remark}
\label{firstremark}
Naturally Theorem \ref{vanishingbigcubes} implies that the result holds also for $L^p$ functions with $p>1$, but this can also be seen easily by considering the Hardy-Littlewood maximal function.
Indeed assume by contradiction that
\begin{equation*}
\lim_{a \to \infty} \sup_{\substack{Q \subset \mathbb{R}^n \\ l(Q) \geq a}} |Q|^{\frac{1}{p}} \fint_Q |f| = A > 0.
\end{equation*}
Then for any number $a < \infty$ there exists a cube $Q$ such that $l(Q) \geq a$ and
\begin{equation*}
|Q|^{\frac{1}{p}} \fint_Q |f| \geq \frac{A}{2}.
\end{equation*}
Thus for every $x \in Q$ we have $M f(x) \geq \frac{A}{2} |Q|^{- \frac{1}{p}}$, where $Mf$ is the non-centered Hardy-Littlewood maximal function. Thus $Mf(x) \notin L^p$ and consequently $f \notin L^p$.

%Because $a$ can be arbitrarily large, it follows that $\| Mf \|_{L^p(\mathbb{R}^n)} = \infty$, which contradicts with the fact that $f \in L^p(\mathbb{R}^n)$. 
%Thus we conclude that
%\begin{equation*}
%\lim_{a \to \infty} \sup_{\substack{Q %\subset \mathbb{R}^n \\ l(Q) \geq a}} %|Q|^{\frac{1}{p}} \fint_Q |f| = 0.
%\end{equation*}
\end{remark}

Clearly the theorem does not hold for weak $L^p$ functions. For example let $n = 1$ and $f(x) = x^{-1/p}$, when $x > 0$. Then we have $f \in L^{p,\infty}(\mathbb{R})$ but \eqref{vanishingbigcubesequation} does not hold for any $b$.
The same function shows us that Theorem \ref{vanishingsmallcubes} does not hold for weak $L^p$ functions.

As a corollary from Theorem \ref{vanishingbigcubes} we get that $VJN_p$ and $CJN_p$ coincide. This answers a question that was posed in \cite{takala} and it answers \cite[Question 5.6]{taoyangyuanvjnp} and \cite[Question 17]{taoyangyuansurvey}.

\begin{corollary}
\label{cjnpvjnptulos}
Let $1 < p< \infty$. Then $CJN_p(\mathbb{R}^n) = VJN_p(\mathbb{R}^n)$.
\end{corollary}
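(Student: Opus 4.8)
The plan is to deduce the corollary directly from Theorem~\ref{vanishingbigcubes} together with the characterization of $CJN_p$ provided by Theorem~\ref{cjnplause}. Since the inclusion $CJN_p \subseteq VJN_p$ is immediate from the definitions (every $f \in C_c^\infty(\mathbb{R}^n)$ lies in $D_p(\mathbb{R}^n) \cap JN_p$, and the $JN_p$-closure preserves the inclusion), the only thing left to prove is the reverse inclusion $VJN_p \subseteq CJN_p$.

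First I would take an arbitrary $f \in VJN_p$; in particular $f \in JN_p(\mathbb{R}^n)$, so Theorem~\ref{vanishingbigcubes} supplies a constant $b$ for which $|Q|^{1/p}\fint_Q |f-b|$ tends to $0$ uniformly over cubes with $l(Q) \geq a$ as $a \to \infty$. Next I would convert this into a statement about mean oscillation, using the elementary bound
\begin{equation*}
\fint_Q |f - f_Q| \leq \fint_Q |f - b| + |f_Q - b| \leq 2 \fint_Q |f - b|,
\end{equation*}
valid for every cube $Q$. Multiplying by $|Q|^{1/p}$ and passing to the supremum over $l(Q) \geq a$ then shows that
\begin{equation*}
\lim_{a \to \infty} \sup_{\substack{Q \subset \mathbb{R}^n \\ l(Q) \geq a}} |Q|^{1/p} \fint_Q |f - f_Q| = 0.
\end{equation*}
Since $f$ already lies in $VJN_p$, Theorem~\ref{cjnplause} then gives $f \in CJN_p$, which establishes $VJN_p \subseteq CJN_p$ and hence the equality.

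I do not expect a genuine obstacle here: the substantive work is entirely contained in Theorem~\ref{vanishingbigcubes} (which in turn rests on Proposition~\ref{paatulos}), and the only point requiring minor care is that that theorem recenters $f$ by a single global constant $b$ rather than by the cube average $f_Q$ appearing in the oscillation norm — a discrepancy that the factor-$2$ triangle-inequality estimate above closes immediately.
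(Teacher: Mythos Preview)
Your proposal is correct and follows essentially the same route as the paper: both invoke Theorem~\ref{vanishingbigcubes} to obtain the constant $b$, use the elementary bound $\fint_Q|f-f_Q|\le 2\fint_Q|f-b|$, and then apply Theorem~\ref{cjnplause}. The presentations differ only cosmetically in how the factor-$2$ triangle-inequality step is written.
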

\begin{proof}
It is clear that $CJN_p \subseteq VJN_p$.
Let $f \in VJN_p \subset JN_p$. Then we get from Theorem \ref{vanishingbigcubes} that there is a constant $b$ such that
\begin{align*}
\lim_{a \to \infty}
\sup_{\substack{Q \subset \mathbb{R}^n \\  l(Q) \geq a}}
|Q|^{1/p} \fint_Q |f - f_Q|
&= \lim_{a \to \infty}
\sup_{\substack{Q \subset \mathbb{R}^n \\  l(Q) \geq a}}
|Q|^{1/p} \fint_Q |f - b - (f - b)_Q|
\\
&\leq \lim_{a \to \infty}
\sup_{\substack{Q \subset \mathbb{R}^n \\  l(Q) \geq a}}
|Q|^{1/p} 2 \fint_Q |f - b|
= 0.
\end{align*}
Then by Theorem \ref{cjnplause} we get that $f \in CJN_p$. This completes the proof.
\end{proof}

From the following result we can infer that the additional condition in \cite[Lemma 5.8]{takala} is not necessary, answering the question posed in \cite{takala}.

\begin{theorem}
\label{vanishingsmallcubes}
Let $X \subseteq \mathbb{R}^n$ be either a bounded cube or the entire space $\mathbb{R}^n$ and $1 < p < \infty$. Suppose that $f \in JN_p(X)$. Then
\begin{equation}
\label{vanishingsmallcubesequation}
\lim_{a \to 0} \sup_{\substack{ Q \subseteq X \\ l(Q) \leq a }} |Q|^{1/p} \fint_Q |f| = 0,
\end{equation}
where the supremum is taken over all cubes $Q \subseteq X$, such that the side length of $Q$ is at most $a$.
\end{theorem}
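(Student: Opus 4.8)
The plan is to argue by contradiction, parallel to the proof of Theorem~\ref{vanishingbigcubes}: assuming \eqref{vanishingsmallcubesequation} fails, we will build an infinite family of pairwise disjoint cubes on each of which the Morrey-type oscillation $|Q|^{1/p}\fint_Q|g-g_Q|$ exceeds a fixed positive constant, contradicting Definition~\ref{jnpmaaritelma}. First the reductions. By \eqref{itseisarvoepayhtalo}, $\||f|\|_{JN_p(X)}\le 2\|f\|_{JN_p(X)}$; for $X$ a bounded cube Theorem~\ref{johnnirenberglemma} and Remark~\ref{weaklpremark} give $JN_p(X)\subseteq L^{p,w}(X)$, while for $X=\mathbb R^n$ \cite[Theorem~5.2]{takala} gives a constant $b$ with $f-b\in L^{p,\infty}(\mathbb R^n)$. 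Hence we may fix a nonnegative $g\in JN_p(X)\cap L^{p,w}(X)$ (namely $g=|f|$, resp.\ $g=|f-b|$), and since $|Q|^{1/p}\fint_Q|f|\le|Q|^{1/p}\fint_Q g+|b|\,l(Q)^{n/p}$ with $|b|\,l(Q)^{n/p}\to 0$, it suffices to prove $\lim_{a\to0}\sup_{l(Q)\le a}|Q|^{1/p}\fint_Q g=0$. If this fails, then $A:=\lim_{a\to0}\sup_{Q\subseteq X,\ l(Q)\le a}|Q|^{1/p}\fint_Q g$ is a well-defined number in $(0,\infty)$ — the supremum is nondecreasing in $a$ and bounded by $\|g\|_{L^{p,w}(X)}$. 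Fixing $\epsilon\le\epsilon_0(n,p)$ as in Proposition~\ref{paatulos}, we get $a_0>0$ such that $|Q'|^{1/p}\fint_{Q'}g\le A(1+\epsilon)$ for all cubes with $l(Q')\le a_0$ (the analogue of \eqref{secondinequality}), while cubes of arbitrarily small side with $|Q|^{1/p}\fint_Q g\ge A(1-\epsilon)$ (the analogue of \eqref{firstinequality}) exist. Call such a cube \emph{good} if moreover $l(Q)\le\tfrac34 a_0$. Then every good cube $Q$ with $3Q\subseteq X$ satisfies the hypotheses of Proposition~\ref{paatulos} with the role of its constant $A$ played by ours, so it yields cubes $Q_1,Q_2\subseteq 3Q$ with $\dist(Q_1,Q_2)\ge\tfrac13 l(Q)$, with $Q_1,Q_2$ disjoint from $\tfrac13 Q$, and with $|Q_i|^{1/p}\fint_{Q_i}|g-g_{Q_i}|\ge c(n,p)A$.

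Next I would locate a single point where all small good cubes accumulate. A family of good cubes with pairwise disjoint triples $\{3Q\}$ has cardinality at most $(\|g\|_{JN_p(X)}/(c(n,p)A))^p$, since applying Proposition~\ref{paatulos} to each member gives a pairwise disjoint family of cubes of oscillation $\ge c(n,p)A$ whose $JN_p$-sum is at most $\|g\|_{JN_p(X)}^p$. Let $N_0$ be the maximal such cardinality and fix a maximal family $\{P^{(1)},\dots,P^{(N_0)}\}$. In the case $X=\mathbb R^n$ one first rules out that good cubes of arbitrarily small side escape to infinity, since that would furnish infinitely many good cubes with pairwise disjoint triples; so the good cubes of small side stay in a bounded set, and $K:=\bigcap_{\tau>0}\overline{\bigcup\{Q:\ Q\text{ good},\ l(Q)\le\tau\}}$ is nonempty and compact. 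If $K$ contained more than $N_0$ points one could, for $\tau$ small, pick one good cube of side $\le\tau$ near each of $N_0+1$ well-separated points of $K$ and obtain $N_0+1$ good cubes with pairwise disjoint triples, contradicting maximality; hence $K$ is finite. A pigeonhole over $K$, followed if necessary by replacing $X$ with a small cube about one point $x_0\in K$ (legitimate since good cubes of arbitrarily small side near $x_0$ persist, and the inequalities \eqref{firstinequality} and \eqref{secondinequality} are inherited), reduces us to the situation: for every $r>0$ there is $\tau(r)>0$ so that every good cube of side $\le\tau(r)$ lies in $B(x_0,r)$, while good cubes of arbitrarily small side still exist.

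Now the extraction. For a good cube $Q$ put $\rho(Q):=\max(\dist(Q,x_0),l(Q))$, so that $3Q\subseteq B(x_0,C(n)\rho(Q))$; because the good cubes cluster at $x_0$ and shrink, $\rho$ attains arbitrarily small values, and we may choose good cubes $Q^{(1)},Q^{(2)},\dots$ with $\rho(Q^{(k+1)})\le\rho(Q^{(k)})/C'(n)$ for a dimensional constant $C'(n)$ to be chosen large. Apply Proposition~\ref{paatulos} to each $Q^{(k)}$ (here $3Q^{(k)}\subseteq X$ automatically if $X=\mathbb R^n$, and for $l(Q^{(k)})$ small if $x_0$ is an interior point of a bounded $X$) and let $\bar Q_k$ be whichever of $Q^{(k)}_1,Q^{(k)}_2$ has $\dist(\bar Q_k,x_0)\ge\tfrac1{20}l(Q^{(k)})$; this choice is possible because those two cubes are $\tfrac13 l(Q^{(k)})$-separated, hence not both within $\tfrac1{20}l(Q^{(k)})$ of the point $x_0$. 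Distinguishing whether $\dist(Q^{(k)},x_0)$ or $l(Q^{(k)})$ is the larger, one checks that $\bar Q_k\subseteq B(x_0,C(n)\rho(Q^{(k)}))\setminus B(x_0,\rho(Q^{(k)})/C(n))$; taking $C'(n)>C(n)^3$ makes these dyadic shells about $x_0$ pairwise disjoint, so the $\bar Q_k$ are pairwise disjoint cubes with $|\bar Q_k|^{1/p}\fint_{\bar Q_k}|g-g_{\bar Q_k}|\ge c(n,p)A$. Then $\sum_k|\bar Q_k|(\fint_{\bar Q_k}|g-g_{\bar Q_k}|)^p=\infty$, contradicting $g\in JN_p(X)$; hence $A=0$, which is \eqref{vanishingsmallcubesequation}.

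The main obstacle is the remaining case, possible only for $X$ a bounded cube, where the concentration point $x_0$ lies on $\partial X$: then $3Q^{(k)}\not\subseteq X$ for the good cubes near $x_0$, and Proposition~\ref{paatulos} is unavailable. One replaces it by Lemma~\ref{tokalemma}, applied with the corner that $Q^{(k)}$ shares with the auxiliary cubes $Q'$, $\tilde Q$ oriented, coordinate by coordinate, toward the interior of $X$; this still produces a cube $\bar Q_k\subseteq 3Q^{(k)}$ disjoint from $\tfrac13 Q^{(k)}$ (since $\tfrac13 Q^{(k)}$ is contained in the relevant corner $\tfrac23$-subcube) with $|\bar Q_k|^{1/p}\fint_{\bar Q_k}|g-g_{\bar Q_k}|\ge c(n,p)A$, but only one such cube per step. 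Verifying that these single cubes can still be arranged into disjoint shells about $x_0$ — or, alternatively, excluding a boundary concentration point altogether via a reflection argument near $\partial X$ — is the technically most demanding part of the proof.
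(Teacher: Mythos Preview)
Your reductions and contradiction setup match the paper. For the main construction when $3Q\subseteq X$ is available (in particular for $X=\mathbb R^n$), your accumulation-point-plus-shells argument is correct but is a genuinely different route from the paper's. The paper never localizes to a single point; instead it runs a direct iterative selection: having picked $Q_{i_1}'\in\{Q_{1,1},Q_{1,2}\}$ so that infinitely many later $3Q_i$ miss it (possible because $\dist(Q_{1,1},Q_{1,2})\ge\tfrac13 l(Q_1)$ while $l(Q_i)\to0$, so each small enough $3Q_i$ can meet at most one of the two), it passes to those $Q_i$, applies Proposition~\ref{paatulos} to the next one, and repeats. This is shorter and avoids your compactness and pigeonhole steps entirely; on the other hand your shell picture gives a cleaner geometric reason for disjointness.

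The boundary case ($x_0\in\partial X$, bounded $X$) is, as you concede, incomplete, and this is a genuine gap. With only Lemma~\ref{tokalemma} you obtain a single cube $\bar Q_k$ per step and lose the two-cube separation that forces $\dist(\bar Q_k,x_0)\gtrsim l(Q^{(k)})$, so your shells can collapse onto $x_0$; and a reflection across $\partial X$ does not obviously keep $g$ in $JN_p(X)$. The paper's mechanism here is not shells but \emph{progressive interiorization}. For a good cube $Q_i$ with $3Q_i\not\subset X$, let $m$ be the number of coordinates $k$ with $P_k(3Q_i)\subset P_k(X)$, and apply Lemma~\ref{tokalemma} in each of the $2^m$ admissible corner orientations. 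Either two of the resulting cubes $\bar Q_i$ lie in opposite directions from $\tfrac13 Q_i$ (and the Case~1 selection goes through), or some $\bar Q_i=:Q_{i,1}$ points in a direction $\downarrow^k$ with $k>m$, i.e.\ toward the interior along a boundary-constrained coordinate. Using the structural alternative in Lemma~\ref{tokalemma} (either $\bar Q\subset\tilde Q$, or $P_s(\bar Q)=P_s(\tilde Q)$ for all $s$ but one), the paper shows that any later good cube $Q_l$ with $3Q_l\cap Q_{i,1}\neq\varnothing$ and $l(Q_l)\le\tfrac29 l(Q_i)$ must satisfy $P_s(3Q_l)\subset P_s(X)$ for all $s\le m$ \emph{and} for $s=k$, hence has at least $m+1$ interior coordinates. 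Since $m<n$, this can recur at most $n$ times before one is forced into the Case~1 situation. That coordinate-counting device is the missing ingredient in your boundary analysis.
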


\begin{remark}
\label{secondremark}
The theorem implies that $L^p$ functions also satisfy \eqref{vanishingsmallcubesequation}. However in that case the result is well known as it follows simply from H\"older's inequality and the dominated convergence theorem. Notice that for $L^p$ functions \eqref{vanishingsmallcubesequation} holds also when $p = 1$, whereas in Remark \ref{firstremark} it is necessary that $p > 1$.
%Assume that $f \in L^p(X) / \mathbb{R}$. Then there is a constant $b$ such that $f - b \in L^p(X)$. Clearly for any $m \in \mathbb{N}$ there is a cube $Q_m \subset X$ such that $l(Q_m) \leq 2^{-m}$ and
%\begin{equation*}
%|Q_m|^{1/p} \fint_{Q_m} |f - b|
%\geq \frac{1}{2} \sup_{\substack{ Q \subseteq X \\ l(Q) \leq 2^{-m} }} |Q|^{1/p} \fint_Q |f - b|.
%\end{equation*}
%Notice also that the sequence of functions $(f-b)_m := (f-b) \cdot \chi_{Q_m}$ converges to 0 pointwise almost everywhere, when $m \to \infty$.
%Thus we get from H\"older's inequality and the dominated convergence theorem that
%\begin{align*}
%\left( \lim_{a \to 0} \sup_{\substack{ Q \subseteq X \\ l(Q) \leq a }} |Q|^{1/p} \fint_Q |f| \right)^p
%&= \lim_{m \to \infty} \left( \sup_{\substack{ Q \subseteq X \\ l(Q) \leq 2^{-m} }} |Q|^{1/p} \fint_Q |f| \right)^p
%\\
%\leq \lim_{m \to \infty} &\left( \sup_{\substack{ Q \subseteq X \\ l(Q) \leq 2^{-m} }} |Q|^{1/p} \fint_Q |f - b| + \sup_{\substack{ Q \subseteq X \\ l(Q) \leq 2^{-m} }} |Q|^{1/p} \fint_Q |b| \right)^p
%\\
%&\leq \lim_{m \to \infty} \left( 2 |Q_m|^{1/p} \fint_{Q_m} |f - b| + 2^{- \frac{mn}{p}} |b| \right)^p
%\\
%&= \lim_{m \to \infty} \left( 2 |Q_m|^{1/p} \fint_{Q_m} |f - b| \right)^p
%\\
%&\leq \lim_{m \to \infty} 2^p \int_{Q_m} |f - b|^p
%= 2^p \int_X \lim_{m \to \infty} |(f - b)_m|^p
%\\
%&= 0.
%\end{align*}

\end{remark}

\begin{remark}
In particular this result implies that for any $1 < p < \infty$ and bounded cube $X \subset\mathbb{R}^n$, the John-Nirenberg space $JN_p(X)$ is a subspace of the vanishing Morrey space $VL^{1,n-\frac{n}{p}}(X)$ as defined in \cite{rafeirosamkosamko}.
\end{remark}

\begin{proof}[Proof of Theorem \ref{vanishingsmallcubes}]
Otherwise the proof is similar to the proof of Theorem~\ref{vanishingbigcubes}, but we have to take into account the possibility that we might have $3 Q \not\subseteq X$, even though $Q \subseteq X$.

Let $f \in JN_p(X)$. Let us assume first that $f$ is nonnegative and $f \in L^{p,\infty}(X)$. Let
\begin{equation}
\label{unvanishingsmallcubes}
\lim_{a \to 0} \sup_{\substack{ Q \subseteq X \\ l(Q) \leq a }} |Q|^{1/p} \fint_Q f = A.
\end{equation}
The limit exists as the sequence is decreasing and bounded from below by 0.

Assume that $A > 0$.
Let $0 < \epsilon \leq \epsilon_0(n,p)$ as in Proposition \ref{paatulos}. Then there is a number $\delta > 0$ such that \eqref{secondinequality} holds for any cube $Q$ where $l(Q) \leq \delta$. Also because of \eqref{unvanishingsmallcubes} we know that for every $a > 0$ there exists a cube $Q \subseteq X$ such that $l(Q) \leq a$ and \eqref{firstinequality} holds for $Q$.
Therefore we can find a sequence of cubes $(Q_i)_{i=1}^{\infty}$ such that $l(Q_1) \leq \frac{3}{4} \delta$, $l(Q_{i+1}) < l(Q_i)$, $\lim_{i \to \infty} l(Q_i) = 0$ and
\begin{equation*}
|Q_i|^{1/p} \fint_{Q_i} f \geq A (1 - \epsilon).
\end{equation*}
for every $i \geq 1$.

{\bf Case 1:} There are infinitely many cubes $Q_i$ in the sequence such that $3 Q_i \subseteq X$.

By taking a subsequence we may assume that $3 Q_i \subseteq X$ for every $i$.
Let $Q_i$ be one of these cubes. Then according to Proposition \ref{paatulos} there exist two cubes $Q_{i,1} \subset 3 Q_i$ and $Q_{i,2} \subset 3 Q_i$ that satisfy \eqref{ekaehto}, \eqref{tokaehto} and \eqref{kolmasehto}.
The cubes $(Q_{i,1})_{i=1}^{\infty}$ may not be pairwise disjoint. However for every cube $Q_i$ we have two cubes to choose from.

Let us construct a new sequence of cubes $(Q_{i_j}')_{j=1}^{\infty}$ iteratively. We start with $Q_{i_1} := Q_1$. Then there exist infinitely many cubes $Q_i$ in the sequence such that $Q_{1,1} \cap 3 Q_i = \varnothing$ for every $i$ or $Q_{1,2} \cap 3 Q_i = \varnothing$ for every $i$. This is true because $\dist(Q_{1,1} , Q_{1,2}) \geq l(Q_1) / 3$ and $\lim_{i \to \infty} l(Q_i) = 0$.
Without loss of generality we may assume that $Q_{1,1} \cap 3 Q_i = \varnothing$ for infinitely many $i > 1$. We set $Q_{i_1}' := Q_{1,1}$. Let $Q_{i_2}$, $i_2 > 1$, be the first cube in the sequence $(Q_i)_{i=1}^{\infty}$ such that $Q_{i_1}' \cap 3 Q_{i_2} = \varnothing$. Then the cubes $Q_{i_1}'$, $Q_{i_2 , 1}$ and $Q_{i_2 , 2}$ are all pairwise disjoint. Thus we can continue by choosing one of the cubes $Q_{i_2 , 1}$ and $Q_{i_2 , 2}$ such that there are still infinitely many cubes $3 Q_i$, $i > i_2$, that are pairwise disjoint with both the chosen cube and $Q_{i_1}'$.

By repeating this process and taking a subsequence of $(Q_i)_{i=1}^{\infty}$, if necessary, we get infinitely many pairwise disjoint cubes $(Q_{i_j}')_{j=1}^{\infty}$ in $X$ that all satisfy \eqref{kolmasehto}.

{\bf Case 2:} There are only finitely many cubes $Q_i$ in the sequence such that $3 Q_i \subseteq X$.
This can only happen if $X$ is a bounded cube.
In this case we shall also construct a new sequence of pairwise disjoint cubes $(Q_{i_j}')_{j=1}^{\infty}$ iteratively.
By taking a subsequence of $(Q_i)_{i=1}^{\infty}$ we may assume that $l(Q_1) \leq \frac{1}{4} l(X)$ and $l(Q_{i+1}) \leq \frac{2}{9} l(Q_i)$. Assume that $3 Q_i \not\subset X$.
Then we know that for any $k \in \{1 , 2 , ... , n\}$ the projection $P_k(3 Q_i)$ can only intersect one of the endpoints of $P_k(X)$.
Let $m$ be the number of base vectors $v_k$ such that $P_k(3 Q_i) \subset P_k(X)$.
Then $0 \leq m < n$ and there exist $2^m$ cubes $\bar{Q}_i \subset X \cap 3 Q_i \setminus \frac{1}{3} Q_i$ as they are defined in Lemma \ref{tokalemma}.

Every such cube $\bar{Q}_i$ is located in some direction from the cube $\frac{1}{3} Q_i$, in a similar way as in the proof of Proposition \ref{paatulos}. Without loss of generality we may assume that the possible directions for these cubes are $\uparrow^1$, $\downarrow^1$, $\uparrow^2$, $\downarrow^2$, ..., $\uparrow^m$, $\downarrow^m$, $\downarrow^{m+1}$, $\downarrow^{m+2}$, ..., $\downarrow^n$, but for each $\bar{Q}_i$ there is naturally only one possible direction for each $k \in \{1 , 2 , ... , n\}$. If one of the cubes is located in direction $\uparrow^k$ and another is in direction $\downarrow^k$, $k \leq m$, then the distance between these two cubes is at least $\frac{1}{3} l(Q_i)$. Thus we can choose one of these cubes into the sequence $(Q_{i_j}')_{j=1}^{\infty}$ the same way as in the first case and there are still infinitely many cubes $Q_l$, $l > i$, such that $3 Q_l \cap \bar{Q}_i = \varnothing$.

Let us assume that there are no two cubes $\bar{Q}_i$ that are located in opposite directions from $\frac{1}{3} Q_i$. Then by similar reasoning as in the proof of Proposition \ref{paatulos}, we know that at least one of the cubes $\bar{Q}_i$ must be located in direction $\downarrow^k$ for some $k > m$. Let us denote such a cube by $Q_{i,1}$. If there are infinitely many cubes $Q_l$, $l > i$, such that $3 Q_l \cap Q_{i,1} = \varnothing$, then we may choose $Q_{i,1}$ into the sequence $(Q_{i_j}')_{j=1}^{\infty}$.

Assume that there are only finitely many cubes $Q_l$, $l > i$, such that $3 Q_l \cap Q_{i,1} = \varnothing$. Then there are infinitely many cubes $Q_l$, $l > i$, such that $3 Q_l \cap Q_{i,1} \neq \varnothing$. Let $Q_l$ be one of these cubes. Because $l(Q_l) \leq \frac{2}{9} l(Q_i)$, we know that $P_k(3 Q_l) \subset P_k(X)$. This is because the distance from $P_k(Q_{i,1})$ to $\partial P_k(X)$ is at least $\frac{2}{3} l(Q_i)$.

In addition for every $1 \leq s \leq m$ we have $P_s(3 Q_l) \subset P_s(X)$. This is because of how the cube $\bar{Q}$ was chosen in Lemma \ref{tokalemma}. Because the cube $Q_{i,1}$ is located in direction $\downarrow^k$ from the cube $\frac{1}{3} Q_i$, we have either $Q_{i,1} \subset \tilde{Q}_i$ (where $\tilde{Q}_i$ is as in Lemma \ref{tokalemma}) or the projection of $\tilde{Q}_i$ and the projection of $Q_{i,1}$ coincide for every base vector except $v_k$.
Thus $P_s(Q_{i,1}) \subset P_s(\tilde{Q}_i)$.
Because $P_s(3 Q_i) \subset P_s(X)$ for every $1 \leq s \leq m$, we get that the distance from $P_s(\tilde{Q}_i)$ to $\partial P_s(X)$ is at least $\frac{2}{3} l(Q_i)$. See Figure \ref{vanishingsmallcubeskuva} to get a better understanding of the situation.

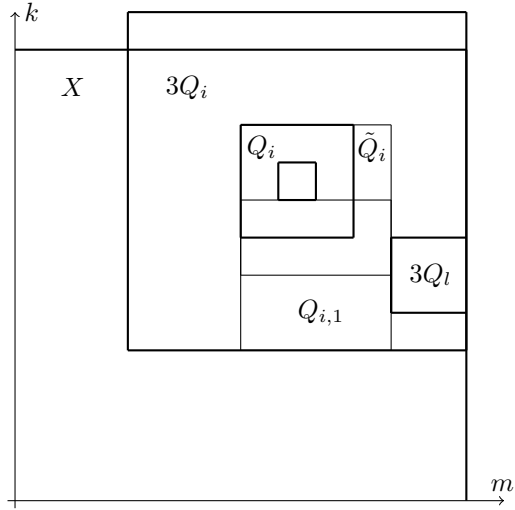
\begin{figure}[ht]
\centering
\begin{tikzpicture}

\draw[black,->] (-0.1,0) -- (6.5,0);
\draw[black,->] (0,-0.1) -- (0,6.5) ;

\draw (0,6.5) node[right] {$k$};
\draw (6.2,0.2) node[right] {$m$};

% X
\draw[black, thick] (0,6)--(6,6);
\draw[black, thick] (6,0)--(6,6);
\draw (1.05,5.5) node[left] {$X$};

% Q_i
\draw[black, thick] (3,3.5)--(3,5);
\draw[black, thick] (4.5,3.5)--(4.5,5);
\draw[black, thick] (3,3.5)--(4.5,3.5);
\draw[black, thick] (3,5)--(4.5,5);
\draw (3.6,4.7) node[left] {$Q_i$};

% Q_i/3
\draw[black, thick] (3.5,4)--(3.5,4.5);
\draw[black, thick] (3.5,4)--(4,4);
\draw[black, thick] (4,4)--(4,4.5);
\draw[black, thick] (3.5,4.5)--(4,4.5);
%\draw (4.2,4.2) node[left] {$\frac{1}{3} Q_i$};

% 3 Q_i
\draw[black, thick] (1.5,2)--(1.5,6.5);
\draw[black, thick] (6,2)--(6,6.5);
\draw[black, thick] (1.5,2)--(6,2);
\draw[black, thick] (1.5,6.5)--(6,6.5);
\draw (2.7,5.5) node[left] {$3 Q_i$};

% \tilde{Q}_i
\draw[black] (3,3)--(3,5);
\draw[black] (3,3)--(5,3);
\draw[black] (5,3)--(5,5);
\draw[black] (3,5)--(5,5);
\draw (5.07,4.7) node[left] {$\tilde{Q}_i$};

% Q_i,1
\draw[black] (3,2)--(3,4);
\draw[black] (3,2)--(5,2);
\draw[black] (5,2)--(5,4);
\draw[black] (3,4)--(5,4);
\draw (4.5,2.5) node[left] {$Q_{i,1}$};

% 3 Q_l
\draw[black, thick] (5,2.5)--(5,3.5);
\draw[black, thick] (5,2.5)--(6,2.5);
\draw[black, thick] (5,2.5)--(5,3.5);
\draw[black, thick] (5,3.5)--(6,3.5);
\draw (5.93,3) node[left] {$3 Q_l$};

\end{tikzpicture}

\caption{An example of how the cubes $X$, $Q_i$, $\tilde{Q}_i$, $Q_{i,1}$ and $Q_l$ may be situated with respect to each other. In the picture we have the projections of these cubes to the $mk$-plane. The cube $Q_{i,1}$ is located in direction $\downarrow^k$ from $\frac{1}{3} Q_i$. Therefore we have either $Q_{i,1}$ as shown in the picture or $Q_{i,1} \subset \tilde{Q}_i$.}

\label{vanishingsmallcubeskuva}

\end{figure}

In conclusion if $3 Q_i \not\subset X$ and there are $m$ base vectors $v_s$ such that $P_s(3 Q_i) \subset P_s(X)$, then we can find a cube $Q_{i,1} \subset X \cap 3 Q_i$ such that \eqref{kolmasehto} holds and there are infinitely many cubes $Q_l$, $l > i$, such that $Q_{i,1} \cap 3 Q_l = \varnothing$, or, if the first option is not possible, there exist infinitely many cubes $Q_l$, $l > i$, such that $P_s(3 Q_l) \subset P_s(X)$ for at least $m+1$ base vectors $v_s$.
Because there are only $n$ base vectors in total, this latter option can only happen at most $n$ times. Thus we can always find infinitely many pairwise disjoint cubes $(Q_{i_j}')_{j=1}^{\infty}$ in $X$ that all satisfy \eqref{kolmasehto}.

In both cases we get
\begin{equation*}
\sum_{j=1}^{\infty} |Q_{i_j}'| \left( \fint_{Q_{i_j}'} \left| f - f_{Q_{i_j}'} \right| \right)^p
\geq \sum_{j=1}^{\infty} c(n,p) A^p
= \infty.
\end{equation*}
This contradicts with the fact that $f \in JN_p(X)$. Therefore we conclude that $A = 0$.

Now assume only that $f \in JN_p(X)$. Then there is a constant $b$ such that $f - b \in L^{p,\infty}(X)$. Also from \eqref{itseisarvoepayhtalo} we get that $|f - b| \in JN_p(X) \cap L^{p,\infty}(X)$. Then using the same argument as earlier we get
\begin{equation*}
\lim_{a \to 0} \sup_{\substack{ Q \subseteq X \\ l(Q) \leq a }} |Q|^{1/p} \fint_Q |f|
\leq \lim_{a \to 0} \sup_{\substack{ Q \subseteq X \\ l(Q) \leq a }} |Q|^{1/p} \fint_Q |f - b| + \lim_{a \to 0} \sup_{\substack{ Q \subseteq X \\ l(Q) \leq a }} |Q|^{1/p} \fint_Q |b|
= 0.
\end{equation*}
This completes the proof.
\end{proof}

%\textbf{Statements and declarations}
%The research leading to these results received funding from […] under Grant Agreement No[…].
%\\
%tai vaihtoehtoisesti
%\\
%The authors have no relevant financial or non-financial interests to disclose.

%An interesting topic of further study is whether there exists a more quantitative version of Theorems \ref{vanishingbigcubes} and \ref{vanishingsmallcubes}. For example we might have
%\begin{equation*}
%\sup_{\substack{Q \subseteq X \\ l(Q) \leq a}} |Q|^{\frac{1}{p}} \fint_Q |f|
%\leq g(n,p,f,a),
%\end{equation*}
%where $g$ is some function where it would be easy to see that $\lim_{a \to 0} g = 0$.

\end{document}